\DeclareSymbolFont{cyrletters}{OT2}{wncyr}{m}{n}
\DeclareMathSymbol{\Sha}{\mathalpha}{cyrletters}{"58}
\DeclareMathSymbol{\Che}{\mathalpha}{cyrletters}{"51}
\newcommand{\Ga}{{\mathbf{G}}_{\rm{a}}}
\newcommand{\Gm}{{\mathbf{G}}_{\rm{m}}}
\DeclareMathOperator{\Pic}{Pic}
\DeclareMathOperator{\Jac}{Jac}
\DeclareMathOperator{\R}{R}
\newcommand*{\Z}{\ensuremath{\mathbf{Z}}}                        
\newcommand*{\F}{\ensuremath{\mathbf{F}}}                        
\newcommand*{\A}{\ensuremath{\mathbf{A}}}                        
\renewcommand*{\P}{\ensuremath{\mathbf{P}}}                        
\newcommand*{\calO}{\mathcal{O}}                                  
\newcommand*{\isoarrow}[1]{\arrow[#1,"\rotatebox{90}{\(\sim\)}"
]}
\numberwithin{equation}{section}
\newtheorem{theorem}{Theorem}[section]
\newtheorem{lemma}[theorem]{Lemma}
\newtheorem{proposition}[theorem]{Proposition}
\newtheorem{corollary}[theorem]{Corollary}
\theoremstyle{definition}
  \newtheorem{definition}[theorem]{Definition}
\theoremstyle{remark}
  \newtheorem{remark}[theorem]{Remark}
\theoremstyle{definition}
  \newtheorem{example}[theorem]{Example}
\tikzset{commutative diagrams/.cd,
mysymbol/.style={start anchor=center,end anchor=center,draw=none}
}
\title{\textbf{(NON-)EXTENDABILITY OF ABEL-JACOBI MAPS}}
\author{Zev Rosengarten \thanks{MSC 2020: 14H20, 14H40. \newline
Keywords: Curves, Jacobians, Abel-Jacobi.  \newline
While completing this work, the author was supported by Israel Science Foundation Grant No.\,2083/24.
}}
\date{}
\begin{document}
\maketitle

\begin{abstract}
We investigate the ``natural'' locus of definition of Abel-Jacobi maps. In particular, we show that, for a proper, geometrically reduced curve $C$ -- not necessarily smooth -- the Abel-Jacobi map from the smooth locus $C^{\rm{sm}}$ into the Jacobian of $C$ does not extend to any larger (separated, geometrically reduced) curve containing $C^{\rm{sm}}$ except under certain particular circumstances which we describe explicitly. As a consequence, we deduce that the Abel-Jacobi map has closed image except in certain explicitly described circumstances, and that it is always a closed embedding for irreducible curves not isomorphic to $\P^1$.
\end{abstract}

\setcounter{tocdepth}{1}
\tableofcontents{}

\section{Introduction}

Given a projective curve $C$ over a field $K$, perhaps the most useful tool in the study of $C$ is the Abel-Jacobi map. Let us recall its definition. Let $\Pic_{C/K}$ denote the Picard functor of $C$. It is (represented by) a smooth $K$-scheme \cite[\S8.2 Th.\,3, \S8.4, Prop.\,2]{neronmodels}.

\begin{definition}
\label{jacobiandef}
For a projective curve $C$ over a field $K$, the {\em Jacobian} of $C$, denoted $\Jac(C)$, is the identity component $\Pic^0_{C/K}$ of the Picard scheme of $C$.
\end{definition}

Now let $C$ be an irreducible projective curve over the field $K$, and let $C^{\rm{sm}}$ denote the smooth locus of $C$. If we fix a point $c_0 \in C^{\rm{sm}}(K)$, then the Abel-Jacobi map (with basepoint $c_0$) is the map
\[
\iota_{c_0}\colon C^{\rm{sm}} \rightarrow \Jac(C), \hspace{.3 in} c \mapsto \calO([c]-[c_0]).
\]
Such maps are most familiar when $C$ is smooth. But even if one is mainly interested in smooth curves, nonsmooth curves and their Abel-Jacobi maps arise naturally. For instance, consider the following situation which often arises in arithmetic applications. Let $R$ be a DVR with fraction field $K$, and suppose given a smooth connected projective curve $C/K$. Then $C$ extends to a flat projective curve $\mathscr{C}/R$. Unfortunately, the special fiber $C_0$ of $\mathscr{C}$ may fail to be smooth. Nevertheless, the Abel-Jacobi map associated to $C_0$ plays an important role in understanding the arithmetic of $C$.

A natural question concerning the Abel-Jacobi map is its maximal locus of definition. More precisely, one may ask whether the map may be extended to a map from a larger (separated) curve (not necessarily contained in $C$) containing $C^{\rm{sm}}$ as a dense open subscheme. It is not hard to check that the answer is no when $C^{\rm{sm}} = C$; see Example \ref{smpropexample} below. But in general, this is not at all clear. The purpose of the present paper is to investigate this question. In particular, we will prove that $C^{\rm{sm}}$ really is in a precise sense the maximal locus of definition of the Abel-Jacobi map except in very particular circumstances, and in particular, it always is when $C$ is irreducible, and we will deduce various consequences from this result.

Before stating our result in full generality, we must define the Abel-Jacobi map beyond the irreducible setting. So let $C/K$ be a projective curve, and let $C_1, \dots, C_n$ be the irreducible components of $C$. For each $i$, suppose given $c_i \in (C_i \cap C^{\rm{sm}})(K)$. Let $\vec{c} := \{c_1, \dots, c_n\}$. The Abel-Jacobi map
\[
\iota_{\vec{c}}\colon C^{\rm{sm}} \rightarrow \Jac(C)
\]
is the map sending $c \in C_i \cap C^{\rm{sm}}$ to the line bundle $\calO([c]-[c_i])$. More precisely, $\iota_{\vec{c}}$ is the map which on $X_i := C_i \cap C^{\rm{sm}}$ corresponds to the line bundle $\calO([\Delta|_{C \times X_i}] - [c_i \times X_i])$ on $C \times X_i$, where $\Delta \subset C \times C$ is the diagonal. This makes sense because the $X_i$ are disjoint open subschemes of $C$. The map $\iota_{\vec{c}}$ does indeed land in $\Jac(C)$ because each $X_i$ is connected.

Before turning to our results, we first note that there are certain assumptions that must be imposed if the Abel-Jacobi map is not to extend beyond $C^{\rm{sm}}$. First of all, we must impose a reducedness assumption on $C$. To see why, if $C$ is a generically reduced curve, then a similar argument as in the proof of \cite[Lem.\,2.2.9]{rostateduality} (with $i = 1$ and $G = \Gm$) shows that the map $\Pic(C) \rightarrow \Pic(C_{\rm{red}})$ is an isomorphism. This argument can be upgraded to show that the map $\Jac(C) \rightarrow \Jac(C_{\rm{red}})$ is an isomorphism. Thus if, say, $C$ is generically smooth but not everywhere reduced, with $C_{\rm{red}}$ smooth, then we see that the Abel-Jacobi map into $\Jac(C) = \Jac(C_{\rm{red}})$ extends from $C^{\rm{sm}}$ to the larger curve $C_{\rm{red}} \supset C^{\rm{sm}}$.

An additional necessary assumption is that we must restrict attention to {\em separated} curves containing $C^{\rm{sm}}$. To see what can go wrong without this restriction, let $C$ be a, say, smooth proper curve, and let $C'$ be $C$ with a doubled point, a non-separated curve. That is, $C'$ is obtained by gluing two copies of $C$ (via the identity map) along the complements of a single closed point. Then the Abel-Jacobi map $C \rightarrow \Jac(C)$ extends to $C'$ (just take the Abel-Jacobi map along each of the glued copies of $C$).

Finally, there is one more, somewhat subtler issue, involving both local and global invariants of the curve $C$, that can arise in general if $C$ is not irreducible. 

\begin{definition}
\label{modifiabledef}
Let $C$ be a geometrically reduced projective curve, let $X$ be an irreducible component of $C$, and let $Y$ be the (reduced) union of the other components. Assume that $x \in X^{\rm{sm}}(K)$ lies in $X \cap Y$, and that $X \cap Y \cap U = x$ scheme-theoretically for some open neighborhood $U \subset C$ of $x$. Let $C'$ denote the curve obtained by ``pulling $X$ away from $Y$ along $x$.'' That is, glue $X$ and $Y$ along $(X \cap Y)-\{x\}$. To be slightly more precise, let $Z := (X \cap Y)-\{x\} \subset C$. Then $C'$ is obtained by glueing $C-\{x\}$ and $(X-Z) \coprod (Y-Z)$ along their common open subset $(X - (X \cap Y)) \coprod (Y - (X \cap Y))$.

One then has a natural map $f\colon C' \rightarrow C$ which is an isomorphism above the complement of $x$. Additionally assume that $f$ is not injective on connected components. In this situation, we say that $C$ is {\em modifiable}, and that $C'$ is a {\em modification} of $C$, or is the {\em modification of $C$ along $(X, x)$}.
\end{definition}

Note that irreducible curves are never modifiable. We will show that, if $C$ is modifiable as above, then the pullback map $f^*\colon\Jac(C) \rightarrow \Jac(C')$ is an isomorphism (Proposition \ref{modjacobian}). In particular, the Abel-Jacobi map for $C$ extends over the strictly larger smooth locus of $C'$, which contains the new point on $X$ mapping to $x$.

The first main result of the present paper is to prove that the above issues are the only manners in which the Abel-Jacobi map may extend beyond the smooth locus.

\begin{theorem}
\label{nonextaj}
Let $C/K$ be a geometrically reduced projective curve, and let $\vec{c} := \{c_1, \dots, c_n\}$ be a collection of rational points in $C^{\rm{sm}}$, one in each irreducible component. Assume that $C$ is not modifiable. Then for any dominant open embedding $C^{\rm{sm}} \subsetneq C'$ with $C'/K$ a separated curve, the Abel-Jacobi map $\iota_{\vec{c}}\colon C^{\rm{sm}} \rightarrow \Jac(C)$ does not extend to a map $C' \rightarrow \Jac(C)$.
\end{theorem}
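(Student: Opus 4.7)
The plan is to prove the contrapositive: if $\iota_{\vec c}$ extends to a map $\phi\colon C'\to\Jac(C)$ along a dominant open embedding $C^{\rm{sm}}\subsetneq C'$ into a separated curve, then $C$ is modifiable. By faithfully flat descent (which preserves both the existence of such an extension and the property of being modifiable), I reduce to the case $K=\bar K$. Existence of a morphism to a separated target is local on the source, and $C'\setminus C^{\rm{sm}}$ is a finite set of closed points, so after restricting to the open subscheme $C^{\rm{sm}}\cup\{x'\}\subset C'$ for one such $x'$ I may further assume that $C'\setminus C^{\rm{sm}}=\{x'\}$ consists of a single closed point.

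Let $\nu_{C'}\colon\widetilde{C'}\to C'$ be the normalization. Because $C^{\rm{sm}}$ is smooth and dense in $C'$, $\nu_{C'}$ is an isomorphism over $C^{\rm{sm}}$, and $\widetilde{C'}$ is canonically realized as an open subscheme of $\widetilde{C}:=\coprod_i\widetilde{C_i}$ containing $C^{\rm{sm}}$. The composite $\psi:=\phi\circ\nu_{C'}\colon\widetilde{C'}\to\Jac(C)$ then extends $\iota_{\vec c}$, and the finite set $Z:=\nu_{C'}^{-1}(x')\subset\widetilde{C'}\setminus C^{\rm{sm}}$ consists of smooth closed points whose images in $C$ under the normalization are singular points of $C$ (since $\widetilde{C}\to C$ is an isomorphism over $C^{\rm{sm}}$).

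Next I localize the obstruction via the canonical short exact sequence
\[
0\longrightarrow H\longrightarrow\Jac(C)\longrightarrow\prod_j\Jac(\widetilde{C_j})\longrightarrow 0,
\]
where $H$ is a smooth connected affine group, arising as the cokernel of $H^0(\calO_C^*)\to H^0(\nu_*\calO_{\widetilde C}^*)$ acting on $\prod_s\widetilde{\calO}_{C,s}^*/\calO_{C,s}^*$, and $\prod_j\Jac(\widetilde{C_j})$ is an abelian variety. The composition of $\iota_{\vec c}|_{X_i}$ with the quotient is the Abel-Jacobi map of $\widetilde{C_i}$ in the $i$-th factor and trivial in the others, and extends uniquely over $\widetilde{C_i}$ by the valuative criterion for properness of the abelian quotient; so the entire obstruction to extending $\iota_{\vec c}$ across each $y\in Z$ lives in $H$. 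A direct local computation of the affine component of $\iota_{\vec c}$ near $y\in\widetilde{C_i}$ in terms of the local ring at $s:=\nu(y)$ shows that vanishing of this obstruction forces $s$ to be smooth on $C_i$ and $C_i\cap Y=\{s\}$ scheme-theoretically in some open neighborhood of $s$, where $Y$ is the union of the other components. Moreover, since the obstruction sits in the global group $H$ and not merely in the local factor $H_s=\widetilde{\calO}_{C,s}^*/\calO_{C,s}^*$, its vanishing requires the image of $H_s$ to be annihilated in the cokernel defining $H$; this is the algebraic manifestation of the modification that detaches $C_i$ from $Y$ at $s$ actually disconnecting $C$, i.e., not being injective on connected components. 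Descending $\psi$ through $\nu_{C'}$ to $\phi$ then identifies $x'$ with the new point of this modification, and these conditions taken together match Definition~\ref{modifiabledef} exactly, so $C$ is modifiable---the desired contradiction.

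The central technical point is the local-global analysis in the third paragraph: computing the affine component of $\iota_{\vec c}$ near a singular point and establishing that its vanishing in the global group $H$---not just in the local piece $H_s$---is equivalent to both the scheme-theoretic modification structure at $s$ and the global disconnection clause. This interplay is precisely what separates the modifiable case from non-modifiable examples such as two $\P^1$'s glued at two nodes, where the local intersection condition holds at each node but modification at a single node fails to disconnect $C$, consistent with the non-extendability of the Abel-Jacobi map.
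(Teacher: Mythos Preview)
Your overall shape is reasonable---reduce to $K=\overline K$, pass to a single extra point, lift to the normalization, and study the failure via the affine kernel $H$ of $\Jac(C)\to\prod_j\Jac(\widetilde{C_j})$---but the third paragraph is where the theorem actually lives, and you have not proved it. Two concrete problems:

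First, ``the obstruction to extending $\iota_{\vec c}$ across $y$ lives in $H$'' needs a precise meaning before you can compute with it. The map $\iota_{\vec c}$ does not factor through $H$, and there is no canonical lift of the extended abelian-variety component back to $\Jac(C)$ with which to subtract and produce an honest $H$-valued function near $y$. The paper handles exactly this issue by passing to \emph{rigidified} Jacobians $\Jac_D(C)$ and $\Jac_{D'}(\widetilde C)$ and invoking an Albanese property (Proposition~\ref{albanese}): an extension of $\iota_{\vec c}$ to $C'$ forces a factorization $\Jac_D(C)\xrightarrow{\psi}\Jac_{D'}(\widetilde C)\xrightarrow{\phi}\Jac(C)$ of the forgetful map, and one then obstructs $\phi$ by exhibiting an element of $\ker(\psi)$ not in $\ker(\alpha)$. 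Your proposal skips this translation step entirely.

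Second, and more seriously, the sentence ``a direct local computation \ldots shows that vanishing of this obstruction forces $s$ to be smooth on $C_i$ and $C_i\cap Y=\{s\}$ scheme-theoretically, \ldots and moreover the global vanishing requires the disconnection clause'' is the whole theorem. In the paper this is Lemma~\ref{existunits}, and its proof is a genuine case analysis: one separately treats (i) $\pi^*\mathfrak m_c$ not generating $\mathfrak m_x$, (ii) $c\notin X^{\rm sm}$, (iii) $X\cap Y$ thicker than $\{c\}$ near $c$, and (iv) the residual case where one must use non-modifiability to find a second point of $D'$ in the same connected component of $\widetilde C$ as $x$. In each case a specific unit $u$ on $\widetilde C$ is constructed whose class in $\mathscr F_c$ does not lift to a global unit, and via Lemma~\ref{rigexists} this unit produces the obstructing element of $\ker(\psi)$. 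None of this is a ``direct local computation''; in particular the disconnection clause enters only in case (iv), through a connected-components argument on $\widetilde C$, not through any formal property of the local ring at $s$. Your proposal asserts the conclusion of this lemma without supplying any of its content.

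A minor point: ``faithfully flat descent preserves \ldots the property of being modifiable'' is not automatic; the paper proves descent of modifiability from $\overline K$ separately (Lemma~\ref{moddesc}), using that each component has a $K$-point to ensure geometric irreducibility of components.
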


We quickly deduce the following corollary. 

\begin{corollary}
\label{zarclinjac}
Let $C/K$ be a geometrically reduced projective curve, and let $\vec{c} := \{c_1, \dots, c_n\}$ be a collection of rational points in $C^{\rm{sm}}$, one in each irreducible component. Assume that $C$ is not modifiable. Then the Abel-Jacobi map $\iota_{\vec{c}}\colon C^{\rm{sm}} \rightarrow \Jac(C)$ has closed image.
\end{corollary}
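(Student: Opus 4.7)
The plan is to derive the corollary from Theorem \ref{nonextaj}: assuming the image fails to be closed, I will construct a strictly larger separated curve $C' \supsetneq C^{\rm{sm}}$ to which the Abel-Jacobi map extends, contradicting that theorem.

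Writing $C^{\rm{sm}} = \bigsqcup_i X_i$ with $X_i := C_i \cap C^{\rm{sm}}$, the image $\iota_{\vec{c}}(C^{\rm{sm}}) = \bigcup_i \iota_{\vec{c}}(X_i)$ is a finite union, so if this fails to be closed then so does some single piece $\iota_{\vec{c}}(X)$ with $X := X_i$. By Chevalley's theorem, $\iota_{\vec{c}}(X)$ is constructible with irreducible closure $Z := \overline{\iota_{\vec{c}}(X)}$ of dimension at most $1$; since the image is not closed, $Z$ is $1$-dimensional and $Z \setminus \iota_{\vec{c}}(X)$ consists of finitely many closed points, from which I pick $z$.

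The key step is to produce a point $p \in \overline{X} \setminus X$ --- where $\overline{X}$ is the smooth projective compactification of $X$ (the normalization of $C_i$) --- together with an extension of $\iota_{\vec{c}}|_X$ over $p$. Form the closure $\overline{\Gamma}$ in $\overline{X} \times_K \Jac(C)$ of the graph of $\iota_{\vec{c}}|_X$. Since $\overline{X}$ is proper, the projection $\pi_2$ is proper, so $\pi_2(\overline{\Gamma})$ is closed in $\Jac(C)$ and, containing $\iota_{\vec{c}}(X)$, equals $Z$. Pulling $z$ back gives $(p, z) \in \overline{\Gamma}$; necessarily $p \in \overline{X} \setminus X$, else $z \in \iota_{\vec{c}}(X)$. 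Since $(p, z)$ lies in the closure of the generic point of $\overline{\Gamma}$, standard scheme theory yields a DVR $R'$ and a map $\Spec R' \to \overline{\Gamma}$ sending the generic point to the generic point of $\Gamma$ and the closed point to $(p, z)$. The composition with $\pi_1$ is a map $\Spec R' \to \overline{X}$ dominating $p$; the resulting inclusion $\calO_{\overline{X}, p} \hookrightarrow R'$ of valuation rings of the common fraction field $K(X)$ forces $R' = \calO_{\overline{X}, p}$ (a DVR is maximal among valuation rings containing it in its fraction field). Composing with $\pi_2$ then produces the desired extension $\Spec \calO_{\overline{X}, p} \to \Jac(C)$ sending the closed point to $z$.

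Finally, form the pushout $C' := C^{\rm{sm}} \cup_X (X \cup \{p\})$, a curve properly containing $C^{\rm{sm}}$ as a dense open subscheme and equipped with a natural morphism $C' \to C$ (the inclusion on $C^{\rm{sm}}$ and the composition $\overline{X} \to C_i \subseteq C$ at $p$). Separatedness of $C'$ follows from the valuative criterion: the image of $p$ in $C$ is a singular point not lying in $C^{\rm{sm}}$, so any two distinct extensions of a generic map $\Spec F \to X$ to $\Spec R \to C'$, one landing in $C^{\rm{sm}}$ and the other at $p$, would push forward to two distinct extensions $\Spec R \to C$ of the same generic map, contradicting the separatedness of the projective curve $C$. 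The original $\iota_{\vec{c}}$ and our new local extension agree on $X$ and hence glue into a morphism $C' \to \Jac(C)$ extending $\iota_{\vec{c}}$, contradicting Theorem \ref{nonextaj}. The main obstacle is the middle step: because $\Jac(C)$ need not be proper, the mere existence of a limit point $z$ of the image does not automatically yield an extension of the Abel-Jacobi map, and one must leverage the properness of the compactification $\overline{X}$ through the graph-closure construction both to produce $p$ and to establish that the extension is well-defined there.
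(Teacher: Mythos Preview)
Your proof is correct and follows the same high-level strategy as the paper---derive the result from Theorem~\ref{nonextaj}---but the constructions of the extending curve $C'$ differ. The paper simply takes $C'$ to be the Zariski closure of $\iota_{\vec{c}}(C^{\rm{sm}})$ in $\Jac(C)$; separatedness of $C'$ is then immediate, and the inclusion $C' \hookrightarrow \Jac(C)$ furnishes the extension, so that if the image were not closed one would have $C^{\rm{sm}} \subsetneq C'$, contradicting the theorem. This is very short, though it tacitly uses that $C^{\rm{sm}} \to C'$ is a dominant open embedding, a point which is not entirely obvious when the Abel-Jacobi map is not injective (cf.\ Example~\ref{lutexample}). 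Your argument sidesteps this issue by building $C'$ explicitly as $C^{\rm{sm}}$ together with a single point $p$ of the smooth compactification $\overline{X}$, using a graph-closure argument in $\overline{X} \times \Jac(C)$ and properness of $\overline{X}$ (rather than of $\Jac(C)$) both to locate $p$ and to extend the map there; the open embedding $C^{\rm{sm}} \hookrightarrow C'$ is then tautological, at the cost of having to verify separatedness by hand via the valuative criterion.

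One small point worth tightening: when you produce the DVR $R'$ realizing the specialization to $(p,z)$, you then use $\mathrm{Frac}(R') = K(X)$ to conclude $R' = \calO_{\overline{X},p}$. The general ``specialization via DVR'' statement allows $\mathrm{Frac}(R')$ to be an extension of the residue field at the generic point, so one should say explicitly that $R'$ is chosen with fraction field $K(X)$, e.g.\ by taking $R'$ to be a local ring of the normalization of the integral one-dimensional local ring $\calO_{\overline{\Gamma},(p,z)}$. Alternatively, one can avoid the DVR entirely: the normalization $\widetilde{\overline{\Gamma}}$ is a smooth integral curve birational to $\overline{X}$, hence an open subscheme of $\overline{X}$ containing $X$ and a preimage of $(p,z)$, and the second projection supplies the desired extension directly.
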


\begin{proof}
Let $C' \subset \Jac(C)$ be the Zariski closure of $\iota_{\vec{c}}(C^{\rm{sm}})$. The scheme $\Jac(C)$, being a locally finite type group scheme over a field, is separated, hence so is $C'$. Because the Abel-Jacobi map extends to $C'$, one has $C' = \iota_{\vec{c}}(C^{\rm{sm}})$ by Theorem \ref{nonextaj}.
\end{proof}

\begin{remark}
In fact, even for modifiable $C$ the image of the Abel-Jacobi map may be closed. For instance, if we take $C$ to be the union of two copies of $\P^1$ meeting at an ordinary double point, then $\Jac(C) = \Jac(C')$, where $C'$ is the disjoint union of two copies of $\P^1$ (Proposition \ref{modjacobian}). Thus $\Jac(C) = 0$, so clearly the Abel-Jacobi map has closed image in this case. The right condition for the Abel-Jacobi map to have closed image is a tad complicated. The requirement is that $C$ only be modifiable by $\P^1$'s. To explain what we mean by this, a modification of $C$ by $\P^1$ is a modification at a point $x$ such that either $x$ lies in exactly two components of $C$, both isomorphic to $\P^1$, or else $x$ lies in more than two components, and the component by which one is modifying (the $X$ in the definition of modifiability) is isomorphic to $\P^1$. We say that $C$ is only modifiable by $\P^1$'s when there is a series of modifications of $C$ by $\P^1$ ending in a non-modifiable curve. We will never use this.
\end{remark}

Because irreducible curves are never modifiable, Theorem \ref{nonextaj} and Corollary \ref{zarclinjac} apply in particular to such curves. We will also prove the following theorem, which says that, if we additionally assume that $C$ is irreducible, then the Abel-Jacobi map is a closed embedding except when $C \simeq \P^1$.

\begin{theorem}
\label{ajclemb}
Let $C/K$ be an irreducible, geometrically reduced projective curve not isomorphic to $\P^1_K$, and let $c_0 \in C^{\mathrm{sm}}(K)$. Then the Abel-Jacobi map $\iota_{c_0}\colon C^{\rm{sm}} \rightarrow \Jac(C)$ is a closed embedding.
\end{theorem}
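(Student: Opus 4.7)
The plan is to verify three properties of $\iota_{c_0}$ which together imply it is a closed embedding: (i) it has closed image; (ii) it is injective on $\bar{K}$-valued points; and (iii) it is unramified. Property (i) is immediate from Corollary~\ref{zarclinjac}, since irreducible curves are never modifiable.

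For (ii): suppose $c, c' \in C^{\rm{sm}}(\bar{K})$ are distinct with $\iota_{c_0}(c) = \iota_{c_0}(c')$. Then the line bundle $\calO_{C}([c] - [c'])$ on $C_{\bar{K}}$ is trivial, so there is a rational function $f$ with Cartier divisor $[c] - [c']$. This $f$ defines a morphism $\phi \colon C_{\bar{K}} \to \P^1_{\bar{K}}$ of degree one, forcing $C_{\bar{K}}$ to be birational to $\P^1_{\bar{K}}$. The normalization $\nu \colon \P^1_{\bar{K}} \to C_{\bar{K}}$ then composes with $\phi$ to give a degree-one self-map of $\P^1_{\bar{K}}$, which after adjusting coordinates is the identity. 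Hence $\nu$ is a section of the separated morphism $\phi$, so $\nu$ is a closed immersion; being also surjective onto the reduced scheme $C_{\bar{K}}$, it must be an isomorphism, contradicting $C \not\cong \P^1_K$.

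For (iii): at $c \in C^{\rm{sm}}(\bar{K})$, the differential $d(\iota_{c_0})_c \colon T_c C^{\rm{sm}} \to T_0 \Jac(C) = H^1(C_{\bar{K}}, \calO_C)$ coincides with the connecting map of $0 \to \calO_C \to \calO_C([c]) \to \calO_C([c])|_c \to 0$, which vanishes iff $h^0(\calO_C([c])) \geq 2$. If this held, a non-constant global section would yield $f$ with $\mathrm{div}(f) = -[c] + E$ where $E$ is effective Cartier of degree one. The key local-algebra observation is that at any singular point $p$ of $C_{\bar{K}}$, the one-dimensional reduced local ring $\calO_{C,p}$ is not regular, so its maximal ideal is not principal; consequently, every effective Cartier divisor supported at $p$ has degree at least two. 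Hence $E$ must be supported on $C^{\rm{sm}}_{\bar{K}}$, so $E = [c']$ for some smooth point $c'$; by (ii) then $c' = c$, forcing $f$ to be constant---a contradiction.

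Combining (ii) and (iii), the diagonal $\Delta_{\iota_{c_0}}$ is an open immersion (by unramifiedness) and surjective on geometric points (by injectivity), hence is an isomorphism; thus $\iota_{c_0}$ is a monomorphism. A monomorphism of finite type between Noetherian schemes is an immersion (by standard results, e.g.\ in EGA~IV or the Stacks Project), which together with (i) yields that $\iota_{c_0}$ is a closed embedding. The main difficulty, I expect, will lie in step (iii): carefully identifying the differential of the Abel-Jacobi map as the stated connecting homomorphism, and verifying the local-algebra claim about degrees of effective Cartier divisors at singular points.
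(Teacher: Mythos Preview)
Your arguments for (i), (ii), and (iii) are correct, and together they yield that $\iota_{c_0}$ is a monomorphism. This is a cleaner and more uniform route to the monomorphism claim than the paper's: the paper separates into cases according to whether the normalization of $C$ is $\P^1$, and in the rational case reduces (via Lemmas~\ref{desfromcov} and~\ref{cubiccover}) to an explicit computation for the nodal and cuspidal cubics (Lemma~\ref{ajclembnodecusp}). Your argument (ii) is in fact exactly the point-separation argument the paper uses inside Lemma~\ref{ajclembnodecusp}, but you apply it directly to $C$ rather than to a cubic cover; your argument (iii), via the connecting map and the observation that effective Cartier divisors at singular points have degree $\geq 2$, avoids the cover machinery entirely.

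The gap is in the last step. The assertion that a monomorphism of finite type between Noetherian schemes is an immersion is false, even with smooth source and smooth target and closed image. For a concrete counterexample in exactly the shape you need, let $Y \subset \P^2_{\bar K}$ be a nodal cubic with normalization $\pi\colon \P^1 \to Y$ and $\pi^{-1}(\text{node}) = \{0,\infty\}$, and set $f\colon \A^1 = \P^1\setminus\{\infty\} \to Y \hookrightarrow \P^2$. Over the node the fiber of $\pi$ is reduced, so the fiber of $f$ is $\Spec \bar K$; by the fiberwise criterion, $f$ is a monomorphism. Its image is the closed curve $Y$, but $f$ is not an immersion since $\A^1 \not\cong Y$. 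Thus ``monomorphism $+$ closed image'' does not suffice.

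What is missing is \emph{properness} (equivalently, finiteness) of the map from $C^{\rm sm}$ onto its image $C'$. The paper supplies this using Theorem~\ref{nonextaj}: since $C^{\rm sm}$ is normal and $C^{\rm sm} \to C'$ is birational, $C^{\rm sm}$ embeds as a dense open in the normalization $\widetilde{C'}$, and the Abel--Jacobi map then visibly extends to $\widetilde{C'}$; Theorem~\ref{nonextaj} forces $C^{\rm sm} = \widetilde{C'}$, so $C^{\rm sm} \to C'$ is finite. A finite monomorphism is a closed immersion, completing the proof. If you want to retain your approach, you need to invoke Theorem~\ref{nonextaj} (or otherwise establish finiteness) at this point; your steps (i)--(iii) alone do not rule out the nodal-branch phenomenon above.
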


We now give some examples to illustrate the above theorems.

\begin{example}
\label{smpropexample}
If $C$ is a smooth projective curve, then any dominant open embedding $C \hookrightarrow C'$ into a separated curve is an isomorphism, so Theorem \ref{nonextaj} is trivial in this case.
\end{example}

\begin{example}
Let $C$ be the nodal cubic $Y^2 = XY + X^3$. Then the normalization $\pi\colon \widetilde{C} \rightarrow C$ is the projective line. If $\mathscr{F}$ is the sheaf defined by the following exact sequence
\[
0 \longrightarrow \calO_C^{\times} \longrightarrow \pi_*(\calO_{\widetilde{C}}^{\times}) \longrightarrow \mathscr{F} \longrightarrow 0
\]
then one readily computes that ${\rm{H}}^0(C, \mathscr{F}) = (K^{\times} \times K^{\times})/\Delta(K^{\times}) \simeq K^{\times}$, where $\Delta$ is the diagonal map. Using the inclusion ${\rm{H}}^1(C, \pi_*(\calO_{\widetilde{C}}^{\times})) \hookrightarrow {\rm{H}}^1(\widetilde{C}, \calO_{\widetilde{C}}^{\times}) = \Pic(\widetilde{C}) = \Z$, one obtains an exact sequence
\[
0 \longrightarrow K^{\times} \longrightarrow \Pic(C) \longrightarrow \Z \longrightarrow 0.
\]
The above can be upgraded to an isomorphism
\[
\Jac(C) \simeq \Gm,
\]
and in this case the Abel-Jacobi map is an isomorphism. One can do a similar calculation with the cuspidal cubic $Y^2 = X^3$ and deduce that the Jacobian is $\Ga$ and that once again the Abel-Jacobi map is an isomorphism. These two examples will play an important role in the proof of Theorem \ref{ajclemb}, because -- over algebraically closed fields -- they cover every reduced, non-smooth, projective rational curve; see Lemma \ref{cubiccover}.
\end{example}

\begin{example}
\label{lutexample}
(due to Werner L\"utkebohmert) Let $C$ be the union of two copies of the projective line meeting at a pair of ordinary double points. Then the normalization $\pi\colon \widetilde{C} \rightarrow C$ is a disjoint union of two projective lines, and if $\mathscr{F}$ is defined by the exact sequence
\begin{equation}
\label{exampleeqn1}
0 \longrightarrow \calO_C^{\times} \longrightarrow \pi_*(\calO_{\widetilde{C}}^{\times}) \longrightarrow \mathscr{F} \longrightarrow 0,
\end{equation}
then one readily checks that one has an isomorphism ${\rm{H}}^0(C, \mathscr{F}) = [(K^{\times} \times K^{\times})/\Delta(K^{\times})]^2$, where $\Delta$ is the diagonal map, such that the map ${\rm{H}}^0(C, \pi_*(\calO_{\widetilde{C}})) = K^{\times} \times K^{\times} \rightarrow {\rm{H}}^0(C, \mathscr{F})$ is identified with the map induced by
\[
(\alpha, \beta) \mapsto (\alpha, \beta) \times (\alpha, \beta).
\]
Using the long exact sequence associated to (\ref{exampleeqn1}) and the inclusion ${\rm{H}}^1(C, \pi_*(\calO_{\widetilde{C}}^{\times}) \hookrightarrow {\rm{H}}^1(\widetilde{C}, \calO_{\widetilde{C}}^{\times}) = \Pic(\widetilde{C}) = \Z^2$, one obtains an exact sequence
\[
0 \longrightarrow K^{\times} \longrightarrow \Pic(C) \longrightarrow \Z^2 \longrightarrow 0.
\]
The above calculation can be upgraded to an isomorphism
\[
\Jac(C) \simeq \Gm,
\]
and in particular we see that the Abel-Jacobi map $C^{\rm{sm}} = (\A^1\backslash\{0\}) \coprod (\A^1\backslash\{0\}) \rightarrow \Jac(C)$ associated to some choice of rational points in each irreducible component of $C$ cannot be an embedding. Thus the irreducibility is essential in Theorem \ref{ajclemb}. However, the map still does have closed image, as asserted by Corollary \ref{zarclinjac}.
\end{example}

\begin{example}
Let $C$ be the union of two elliptic curves $E_1$, $E_2$ meeting at an ordinary double point which is not the identity of either curve. Then by Proposition \ref{modjacobian}, the normalization map $\widetilde{C} := E_1 \coprod E_2 \rightarrow C$ induces an isomorphism on Jacobians. Thus $\Jac(C) = \Jac(\widetilde{C}) = E_1 \times E_2$. Let $0 \neq p_i \in E_i(K)$ be the preimages of the node on $C$. Then the Abel-Jacobi map of $C$ corresponding to the two points $0_i \in E_i(K)$ is the map $C^{\rm{sm}} = (E_1-\{p_1\}) \coprod (E_2 - \{p_2\}) \rightarrow E_1 \times E_2$ which on $E_i-\{p_i\}$ is the inclusion into the $i$ factor and $0$ on the other factor. In particular, we see that the Abel-Jacobi map for $C$ has image $(0 \times (E_2-\{p_2\}))\cup ((E_1-\{p_1\}) \times 0)$, which is not closed in $E_1 \times E_2$. Thus the assumption in Corollary \ref{zarclinjac} is necessary.
\end{example}

\subsection*{Acknowledgements}
I wish to thank Werner L\"utkebohmert for providing Example \ref{lutexample}, for correcting an earlier version of Corollary \ref{zarclinjac}, and for other helpful suggestions. I would also like to thank the anonymous referee for his/her careful reading of the manuscript and many helpful comments and suggestions, including the proof of Lemma \ref{ajclembnodecusp} (and correcting an earlier incorrect proof).

\subsection{Notation and terminology}

Throughout this paper, $K$ denotes a field, and $K_s$, $\overline{K}$ denote separable and algebraic closures of $K$, respectively.

\section{Preparations for the main results}

We begin with the following basic (probably well-known) lemma. Recall that a distinguished affine open subset of an affine scheme $X$ is one of the form $X_f$ for some $f \in \Gamma(X, \calO_X)$.

\begin{lemma}
\label{distopencontain}
Let $X$ be an affine scheme, and let $Z \subset X$ be a finite subset and $V \subset X$ an open subset containing $Z$. Then there is a cover of $V$ by distinguished affine open subsets of $X$ containing $Z$.
\end{lemma}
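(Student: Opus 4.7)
The plan is to pass to the affine ring and reduce the statement to an application of prime avoidance. Write $X = \Spec(A)$, and let $I \subset A$ be the ideal of functions vanishing on the closed subset $X \smallsetminus V$, so that $V(I) = X \smallsetminus V$. For any point $x \in X$ corresponding to a prime $\mathfrak{p}_x$, one then has $x \in V$ if and only if $I \not\subset \mathfrak{p}_x$.

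The goal is: for each $v \in V$, produce some $f \in A$ such that the distinguished open $X_f$ contains $v$, is contained in $V$, and contains $Z$. Translating into algebra, I want $f \notin \mathfrak{p}_v$, $f \in I$ (which forces $V(f) \supset X \smallsetminus V$, hence $X_f \subset V$), and $f \notin \mathfrak{p}_z$ for each $z \in Z$. Since $v \in V$ and $Z \subset V$, the ideal $I$ is not contained in $\mathfrak{p}_v$ nor in any $\mathfrak{p}_z$ for $z \in Z$. By prime avoidance (applied to the finitely many primes $\mathfrak{p}_v$ and $\{\mathfrak{p}_z\}_{z \in Z}$), there exists
\[
f \in I \smallsetminus \Bigl(\mathfrak{p}_v \cup \bigcup_{z \in Z} \mathfrak{p}_z\Bigr).
\]
The corresponding $X_f$ is a distinguished affine open of $X$ contained in $V$, containing $v$, and containing $Z$. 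As $v$ ranges over $V$, these distinguished opens cover $V$, giving the desired cover.

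There is no real obstacle here; the only mild point to get right is choosing $f$ inside the ideal $I$ of $X \smallsetminus V$ (rather than merely in its radical) so that the distinguished open is genuinely contained in $V$, and then observing that the conditions on $f$ at $v$ and at the points of $Z$ are prime-avoidance conditions against finitely many primes none of which contain $I$.
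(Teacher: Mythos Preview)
Your proof is correct and follows essentially the same route as the paper: both reduce the question to prime avoidance for the ideal cutting out the complement of $V$ against the finitely many primes corresponding to $Z$ (and the extra point $v$). The only organizational difference is that the paper first isolates the statement ``there exists one distinguished open of $X$ inside $V$ containing $Z$'' and then applies it to $Z\cup\{v\}$, whereas you fold this into a single step; your closing parenthetical about $I$ versus its radical is a bit off (your $I$ is already radical), but this does not affect the argument.
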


\begin{proof}
It is enough to show that there is a distinguished affine open of $X$ contained in $V$ and containing $Z$. Indeed, assuming this result for general $Z$, one may for each $v \in V$ apply the lemma with $Z$ replaced by $Z \cup\{v\}$ to obtain a distinguished open $U_v$ inside $V$ containing $Z\cup\{v\}$. The $U_v$ then provide the required cover of $V$.

Let $A := \Gamma(X, \calO_X)$, and let $Z = \{z_1 := \mathfrak{p}_1, \dots, z_n := \mathfrak{p}_n\}$. Choose for each $i$ a distinguished affine open $D(f_i) := {\rm{Spec}}(A_{f_i}) \subset V$ containing $z_i$. We may replace $V$ by $\cup_{i=1}^n D(f_i) = X-V(I)$, where $I := (f_1, \dots, f_n)$. We wish to find $g \in A$ such that $D(g) \cap V(I) = \emptyset$ but $Z \subset D(g)$. The former condition is equivalent to $g \in \sqrt{I}$, the radical of $I$, while the latter is equivalent to $g \notin \mathfrak{p}_1\cup\dots\cup \mathfrak{p}_n$. Thus we need to show that $\sqrt{I} \not\subset \mathfrak{p}_1\cup\dots\cup\mathfrak{p}_n$. By the prime avoidance lemma, this is equivalent to $\sqrt{I} \not\subset \mathfrak{p}_i$ for all $i$, and this in turn holds because $z_i \in D(f_i) \Longrightarrow f_i \not\in \mathfrak{p}_i$.
\end{proof}

In order to prove Theorem \ref{nonextaj}, we will have to be able to contract finite subschemes of a given scheme down to a rational point. In order to prove Theorem \ref{ajclemb}, we will also require the existence of a universal such contraction, and to be able to identify this universal contraction. We will only require the curve case, but since it is no more difficult, we carry out the construction in general. Such contractions are also constructed in \cite{ferrand} and \cite{schwede}, but we include the result below for the convenience of the reader.

\begin{proposition}$($Contracting Finite Subschemes$)$
\label{contract}
Let $X$ be a quasi-projective $K$-scheme, and let $Z \subset X$ be a finite closed subscheme. Then there is a morphism $f\colon X \rightarrow X/Z$ of $K$-schemes and a point $y \in (X/Z)(K)$ such that $Z \subset f^{-1}(y)$ scheme-theoretically and such that one has the following universal property: Given a morphism of $K$-schemes $g\colon X \rightarrow W$ and a point $w \in W(K)$ such that $Z \subset g^{-1}(w)$ scheme-theoretically, there is a unique $K$-morphism $h\colon X/Z \rightarrow W$ such that $g = h\circ f$. Furthermore,
\begin{itemize}
\item[(1)] The triple $(f, X/Z, y)$ is unique up to unique isomorphism.
\item[(2)] For any open subscheme $V \subset X$ containing $Z$, the natural map $V/Z \rightarrow X/Z$ is an open embedding.
\item[(3)] $f$ induces an isomorphism $X-Z \xrightarrow{\sim} (X/Z)-\{y\}$.
\item[(4)] $f^{-1}(y) = Z$ scheme-theoretically.
\item[(5)] $f$ has schematically dense image.
\item[(6)] $f$ is a finite morphism, and $X/Z$ is a finite type $K$-scheme.
\item[(7)] If $X = {\rm{Spec}}(A)$ and $Z = V(I)$, then $X/Z = {\rm{Spec}}(K[I])$, where $K[I] := \{\lambda + i\mid \lambda \in K, i \in I\}$, $y$ is the map $\phi\colon K[I] \rightarrow K$, $\lambda + i \mapsto \lambda$, and $f$ is induced by the inclusion $K[I] \subset A$.
\end{itemize}
\end{proposition}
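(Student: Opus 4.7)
The plan is to handle the affine case directly via the description in (7), then obtain the general case by gluing, using the quasi-projectivity of $X$ to place $Z$ inside an affine open.

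For the affine case, take $X = \Spec(A)$, $Z = V(I)$, and assume $Z$ is nonempty (the empty case is trivial). First I would check that $K[I] := \{\lambda + i : \lambda \in K, i \in I\}$ is a $K$-subalgebra of $A$, using $(\lambda_1 + i_1)(\lambda_2 + i_2) = \lambda_1\lambda_2 + (\lambda_1 i_2 + \lambda_2 i_1 + i_1 i_2) \in K + I$. Since $Z$ is nonempty, $1 \notin I$, so $K \cap I = 0$, making $\phi\colon K[I] \to K$, $\lambda + i \mapsto \lambda$, a well-defined $K$-algebra map and producing the point $y$. The inclusion $K[I] \hookrightarrow A$ is finite because $A/I$ is finite-dimensional over $K$ (as $Z$ is finite); lifts of a $K$-basis of $A/I$ together with $1$ generate $A$ as a $K[I]$-module. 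Since $\ker(\phi) = I$ is already an ideal of $A$, its extension to $A$ is again $I$, so $f^{-1}(y) = V(I) = Z$ scheme-theoretically, giving (4); (3) and (5) are immediate from the injectivity of $K[I] \hookrightarrow A$.

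For the affine universal property, given $g\colon X \to W$ with $w \in W(K)$, first choose an affine open $W' \ni w$: since $Z$ is finite and set-theoretically in $g^{-1}(w)$, it lies in $g^{-1}(W')$, and we may replace $W$ by $W'$ and write $W = \Spec(B)$, $w = \Spec(B/\mathfrak{m})$ with residue map $\psi\colon B \to K$. The hypothesis $Z \subset g^{-1}(w)$ scheme-theoretically reads $g^*(\mathfrak{m})A \subset I$, so for any $b \in B$, decomposing $b = \psi(b) + (b - \psi(b))$ with $b - \psi(b) \in \mathfrak{m}$ gives $g^*(b) = \psi(b) + g^*(b - \psi(b)) \in K + I = K[I]$. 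Hence $g^*$ factors uniquely through $K[I] \subset A$, establishing (7) and the universal property in the affine case.

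For the general case, use the standard fact that any finite set of closed points of a quasi-projective $K$-scheme lies in a common affine open to produce $U \subset X$ affine containing $Z$. Construct $U/Z$ as above and form $X/Z$ by gluing $U/Z$ to $X - Z$ along $U - Z \simeq (U/Z) - \{y\}$. Properties (3)--(6) transfer immediately, and for (2), shrinking $U$ (using Lemma \ref{distopencontain}) to lie inside any given open $V \supset Z$ realizes $V/Z$ as an open subscheme of $X/Z$. For the global universal property, given $g\colon X \to W$ with $Z \subset g^{-1}(w)$ scheme-theoretically, pick an affine open $W' \ni w$ and an affine open $U' \subset g^{-1}(W')$ containing $Z$; the affine case produces $h_{U'}\colon U'/Z \to W'$, which glues with $g|_{X - Z}$ along $U' - Z$ (where both restrict to $g|_{U' - Z}$) to give $h\colon X/Z \to W$ with $h \circ f = g$; uniqueness holds since $f$ is an isomorphism off $Z$ and $h(y) = w$ is forced, and (1) follows formally. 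The main obstacle is the ring-theoretic decomposition $g^*(b) = \psi(b) + g^*(b - \psi(b)) \in K[I]$, which crucially relies on the scheme-theoretic (rather than merely set-theoretic) containment $Z \subset g^{-1}(w)$; after that, the gluing and transfer of properties are bookkeeping.
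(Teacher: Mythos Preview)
Your finiteness argument for (6) is slicker than the paper's: observing that lifts of a $K$-basis of $A/I$ together with $1$ generate $A$ as a $K[I]$-module (since any $a$ decomposes as $\sum \lambda_i a_i + i$ with $i \in I \subset K[I]$) is more direct than the paper's detour through integrality over $\overline{K}[I]$. You should still cite Artin--Tate to conclude that $K[I]$ is of finite type over $K$, and note that (3) requires the one-line check $K[I]_g = A_g$ for $g \in I$ rather than mere injectivity.

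There is, however, a genuine gap surrounding (2). In the affine paragraph you write ``we may replace $W$ by $W'$,'' but $g$ need not carry $X$ into $W'$; one must first shrink $X$ to an open $V \subset g^{-1}(W')$ containing $Z$, and then one needs $V/Z \hookrightarrow X/Z$ to be an open embedding---which is exactly (2). Your later argument for (2) (``shrinking $U$'') tacitly assumes the glued construction of $X/Z$ is independent of the chosen affine $U$, which in turn requires the universal property you have not yet established for non-affine targets, so the logic is circular. The paper breaks this circle with a concrete ring-theoretic step you omit: for $X = \Spec(A)$ affine and a distinguished open $D(f) \supset Z$, it proves directly that $D(f)/Z \to X/Z$ is an open embedding by manufacturing $g \in \sqrt{(f)} \cap (1+I) \subset K[I]$ (using that the minimal polynomial of $\bar f$ in the Artinian ring $A/I$ has nonzero constant term, since $f$ is a unit along $Z$) and then verifying $K[I]_g \cong K[I_g]$. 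Combined with Lemma~\ref{distopencontain}, this yields (2) in the affine case unconditionally; only then can the universal property for arbitrary $W$ be established and the passage to general quasi-projective $X$ be carried out by gluing as you describe.
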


\begin{proof}
Once one has constructed $f$ with the universal property, condition (1) follows from Yoneda's Lemma. We first construct $f$ when $X = {\rm{Spec}}(A)$ is affine via the construction in (7). Let us check that $(f, X/Z, y)$ has the required universal property with respect to maps into {\em affine} $K$-schemes. Thus let $\zeta\colon B \rightarrow A$ be a $K$-algebra homomorphism, and suppose given a $K$-homomorphism $\psi\colon B \rightarrow K$ with $\zeta(\ker(\psi)) \subset I$. Then we must check that $\zeta$ lands in $K[I] \subset A$. To see this, let $b \in B$. Then we may write $b = \lambda + \alpha$ with $\lambda \in K$ and $\alpha \in \ker(\psi)$, hence $\zeta(b) = \lambda + \zeta(\alpha) \in K[I]$, as required.

Next we check the properties (3)--(5). One has $f^{-1}(y) = Z$, and $f$, being induced by an injective map, has schematically dense image. To see that $f$ induces an isomorphism above the complement of $y$, let $g \in K[I]$ with $y \notin {\rm{Spec}}(K[I]_g)$ -- that is, $\phi(g) = 0$, or equivalently, $g \in I$. We must show that the map $f^*_g\colon K[I]_g \rightarrow A_g$ is an isomorphism. Injectivity is immediate. For surjectivity, let $a \in A$. Then $ga \in I \subset K[I]$, so $a = f^*_g(ga/g)$. Thus $f^*_g$ is surjective.

Next we check (6). That is, we prove that $A$ is a finite $K[I]$-module, and that $K[I]$ is a finitely-generated $K$-algebra. The following proof is basically the one given by \cite{overflow}. We claim that $A$ is integral over $K[I]$. Indeed, fix an algebraic closure $\overline{K}$ of $K$, and let $a \in A$. Consider the element $F(T) := \prod_{z \in Z(\overline{K})} (T - a(z)) \in \overline{K}[T]$. Then $F(a)$ vanishes at every point of $Z_{\overline{K}} = V(I_{\overline{K}})$, hence $F(a)^n \in \overline{K}I$ for some $n > 0$. This gives an integral equation for $a$ over $\overline{K}[I] \subset \overline{K} \otimes_K A$. Thus $a$ is integral over $\overline{K}[I]$, which in turn is integral over $K[I]$, hence $a$ is integral over $K[I]$. Since $a \in A$ was arbitrary, $A$ is integral over $K[I]$. Because $A$ is a finitely-generated $K$-algebra, it follows that $A$ is finite over $K[I]$, and that $A$ is integral over $C$ for some finitely-generated $K$-subalgebra $C \subset K[I]$. Then $A$ is a finite $C$-module, hence so is $K[I]$. Thus $K[I]$ is a finitely-generated $K$-algebra.

Next suppose that $f \in A$ with $Z \subset D(f) \subset {\rm{Spec}}(A)$, where $D(f)$ is the distinguished affine open associated to $f$. Then we check that the natural map $D(f)/Z \rightarrow X/Z$ is an open embedding. This is automatic away from $y$ thanks to (3), hence only needs to be checked near $y = f(Z)$. In order to prove the assertion, therefore, one may replace $D(f)$ by $D(g)$ with $Z \subset D(g) \subset D(f)$. If $g \in K[I]$, then the map $K[I] \rightarrow K[I_g]$ becomes an isomorphism upon localizing at $g$, hence we would be done. Thus we wish to construct $g \in \sqrt{f} \cap K[I]$ such that $Z \subset D(g)$ under the assumption that $Z \subset D(f)$. We will construct $g \in \sqrt{f}\cap(1+I)$. The image $\overline{f}$ of $f$ in the finite $K$-algebra $A/I$ is integral over $K$; let $P(T) \in K[T]$ be the minimal polynomial. If the constant term of $P$ vanished, then -- because $f$ is invertible along $Z$ -- one could divide by $\overline{f}$ to obtain a smaller polynomial. Thus $P(0) \neq 0$. Multiplying through by $-P(0)^{-1}$, we obtain $fQ(f) \in 1+I$ for some $Q \in K[T]$. We then take $g := fQ(f)$. This proves that $D(f)/Z \rightarrow X/Z$ is an open embedding. In conjunction with Lemma \ref{distopencontain}, this also proves (2) for the pair $(X, Z)$, and for opens $Z \subset V \subset X$ such that $V/Z$ exists.

Next we prove that $f$ has the desired universal property with respect to maps into arbitrary $K$-schemes. Given any map $g\colon X \rightarrow W$ as in the proposition, the existence and uniqueness of the map $h$ on $X-Z$ is immediate from (3). Now choose an affine open neighborhood $U \subset W$ of $w$. Then $g^{-1}(U)$ is a neighborhood of $Z$. Choose $Z \subset V \subset g^{-1}(U)$ with $V$ affine open. Then $g|V$ has image landing inside an affine open, hence by the already-treated case of maps between affine schemes, there is a unique map $h_V\colon V/Z \rightarrow W$ factoring $g|V$. Then we may take $h$ to be the glueing of $h_V$ and $g|X-Z$, where we are using the fact that $V/Z \subset X/Z$ is an open embedding. This completes the proof of the existence of $f$ with the required universal property and properties (2)--(7) for affine $X$ (with the assumption that $V/Z$ exists in (2)).

Next we prove the existence of $f$ with the desired universal property in general, as well as the properties (2)--(6). Away from $Z$ the construction is immediate. On the other hand, because $X$ is quasi-projective, there is an affine open subscheme of $X$ containing $Z$, so -- in conjunction with (2) in the affine setting -- the general case reduces to the affine case.
\end{proof}

The following lemma will play a crucial role in the proof of Theorem \ref{nonextaj}.

\begin{lemma}
\label{rigexists}
Let $f\colon X \rightarrow Y$ be a finite birational map of quasi-projective curves over $K$ with schematically dense image. Let $\mathscr{F}$ be the finitely supported sheaf on $Y$ defined by the following exact sequence:
\[
0 \longrightarrow \calO_{Y}^{\times} \longrightarrow f_*(\calO_X^{\times}) \longrightarrow \mathscr{F} \longrightarrow 0.
\]
Let $Z$ be the support of $\mathscr{F}$, and let $U$ be a neighborhood of $Z$, let $u \in \Gamma(f^{-1}(U), \calO_X^{\times})$, and let $\overline{u}$ denote the image of $u$ in ${\rm{H}}^0(Y, \mathscr{F})$. Let $\mathscr{L} \in {\rm{H}}^1(Y, \calO_Y^{\times}) = \Pic(Y)$ be the image of $\overline{u}$ under the connecting map. Finally, let $D \subset Y$ be a finite subscheme with support $Z$. Then there exist a trivialization $\iota\colon \mathscr{L}|_D \xrightarrow{\sim} \calO_D$ of $\mathscr{L}$ along $D$ and an isomorphism $\theta\colon \calO_X \xrightarrow{\sim} f^*(\mathscr{L})$ such that $f^*(\iota)\circ(\theta|_{f^{-1}(D)})\colon \calO_{f^{-1}(D)} \xrightarrow{\sim} \calO_{f^{-1}(D)}$ is multiplication by $u|_{f^{-1}(D)}$.
\end{lemma}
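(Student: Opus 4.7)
The plan is to unwind the torsor description of the connecting homomorphism for the short exact sequence $1 \to \calO_Y^\times \to f_*\calO_X^\times \to \mathscr{F} \to 1$. Up to a convention for the connecting map, $\mathscr{L}$ corresponds to the $\calO_Y^\times$-torsor whose sections over an open $W \subset Y$ are local lifts of $\overline{u}|_W$ to $f_*\calO_X^\times(W) = \calO_X^\times(f^{-1}(W))$. Writing $V := Y \setminus Z$ and using that $f$ is an isomorphism over $V$, one obtains a canonical local trivialization $s_V$ of $\mathscr{L}|_V$ from the lift $1 \in \calO_Y^\times(V) = f_*\calO_X^\times(V)$, and a canonical local trivialization $s_U$ of $\mathscr{L}|_U$ from the given lift $u$; the two differ by $u|_{U \cap V}$ on the overlap.

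To produce $\iota$, I would observe that since $D$ has support $Z \subset U$, the restriction $s_U|_D$ is a basis of $\mathscr{L}|_D$, and then take $\iota$ to be the inverse of the resulting trivialization $\calO_D \xrightarrow{\sim} \mathscr{L}|_D$. To produce $\theta$, I would patch as follows: on $f^{-1}(V) = V$ we already have the trivialization $s_V$ of $f^*\mathscr{L}|_V$, and on $f^{-1}(U)$ the pullback $f^*s_U$ trivializes $f^*\mathscr{L}|_{f^{-1}(U)}$; these two agree on the overlap $U \cap V$ up to the factor $u|_{U \cap V}$. The decisive point is that $u$, which a priori only provides a trivialization of $\mathscr{L}$ over $U$, in fact extends to an invertible section of $\calO_X^\times$ on the entire preimage $f^{-1}(U)$; hence one may rescale $f^*s_U$ by the appropriate power of $u$ to obtain a trivialization on $f^{-1}(U)$ coinciding with $s_V$ on the overlap, and these then patch to a global trivialization $\theta\colon \calO_X \xrightarrow{\sim} f^*\mathscr{L}$.

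The final step is to evaluate the composition $f^*\iota \circ \theta|_{f^{-1}(D)}$. By construction, $\theta|_{f^{-1}(D)}$ differs from $f^*s_U|_{f^{-1}(D)}$ by a unit which equals $u|_{f^{-1}(D)}$ (with the appropriate convention for the connecting homomorphism and the torsor-to-line-bundle correspondence), and $f^*\iota$ strips off the $f^*s_U$ basis, so the composition is multiplication by $u|_{f^{-1}(D)}$ as required.

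The main obstacle is largely bookkeeping: one must fix precise conventions for the sign of the connecting homomorphism, for the passage from $\calO_Y^\times$-torsor to line bundle, and for the direction of the trivialization $\iota$, and then carry them consistently through the pullback to $X$ so that the final composition comes out to $u|_{f^{-1}(D)}$ rather than its inverse. The substantive content, however, is the structural observation that $u$ extends to an actual global unit on $f^{-1}(U)$; this is precisely what permits rescaling the pulled-back local trivialization on \emph{all} of $f^{-1}(U)$, rather than only on the overlap, and hence is what allows the two local descriptions of $f^*\mathscr{L}$ to be glued into a genuine global trivialization.
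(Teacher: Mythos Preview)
Your proposal is correct and follows essentially the same approach as the paper: the paper carries out the construction in explicit \v{C}ech language for the two-set cover $\{Y-Z,\,U\}$, obtaining $\mathscr{L}$ as the glueing of trivial bundles along the transition function $u|_{U-Z}$, and then defines $\theta$ via the global section $(1,u)$ of $f^*\mathscr{L}$ and $\iota$ via the generator $(u^{-1},1)$ of $\mathscr{L}|_U$ restricted to $D$---which is exactly your torsor description translated into coordinates. Your identification of the key point (that $u$, viewed upstairs, is a unit on all of $f^{-1}(U)$ rather than merely on the overlap) and of the sign ambiguity are both on target; the paper in fact includes a remark immediately after the lemma making precisely your observation about conventions.
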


\begin{remark}
The above lemma really depends on some universal choices of sign in order to understand how we identify $\Pic(S)$ with $\check{\rm{H}}^1(S, \Gm)$ for a scheme $S$. Our choices will become clear in the course of the proof of the lemma. Suitably reversing these choices would replace $u$ by $u^{-1}$ in the above. At any rate, this sign issue will not ultimately matter for us.
\end{remark}

\begin{proof}
Consider the open cover $\mathcal{U} := \{Y-Z, U\}$ of $Y$. Then $(\overline{1}, \overline{u})$ defines a \v{C}ech $0$-cocycle in $\check{\rm{C}}^0(\mathcal{U}, \mathscr{F})$ whose image in $\check{\rm{H}}^0$ is $\overline{u}$. Its image in $\check{\rm{H}}^1(\mathcal{U}, \calO_Y^{\times})$ under the connecting map is computed by the usual snake lemma construction. First we lift the cocycle to the element $(1, u) \in \check{\rm{C}}^0(\mathcal{U}, f_*(\calO_X^{\times}))$, and then we form the \v{C}ech differential of this element to obtain the \v{C}ech $1$-cocycle $u \in \check{\rm{C}}^1(\mathcal{U}, \calO_Y^{\times}) \subset \Gamma(U-Z, \calO_Y^{\times})$, and the image of $\overline{u}$ under the connecting map is the image of this element in $\check{H}^1 = \varinjlim_U \check{H}^1(\mathcal{U}, \cdot) = {\rm{H}}^1$. This in turn is the line bundle $\mathscr{L}$ on $Y$ obtained by gluing the trivial line bundles on $Y-Z$ and $U$ along multiplication by $u|U - Z$ on the intersection. (Note: $u$ makes sense on $U-Z$ because $f$ is an isomorphism away from $Z$.) That is, a section of $\mathscr{L}$ over an open $V \subset Y$ is given by a pair $(s, t)$ with $s \in \Gamma(V-Z, \calO_Y)$, $t \in \Gamma(V \cap U, \calO_Y)$ such that $t = us$ on $(V-Z) \cap U$.

Now we take $\theta$ to be the map sending $1 \in \calO_X$ to the generating global section $(1, u)$ of $f^*\mathscr{L}$, where $1 \in \Gamma(X-f^{-1}(Z), \calO_X)$ and $u \in \Gamma(f^{-1}(U), \calO_X)$. Now $\mathscr{L}|_D = (\mathscr{L}|_U)|_D$, and $\mathscr{L}|_U$ is generated by the section $(u^{-1}, 1)$, where $u^{-1} \in \Gamma(U-Z, \calO_Y)$ and $1 \in \Gamma(U, \calO_Y)$. Thus we may define $\iota$ to be the inverse of the map sending $1$ to (the restriction to $D$ of) this section. Now we compute the composition $f^*(\iota)\circ(\theta|_{f^{-1}(D)})$ to be
\[
1 \mapsto (1, u) = u(u^{-1}, 1) \mapsto u,
\]
as required by the lemma.
\end{proof}

A crucial role in the proof of Theorem \ref{nonextaj} will also be played by a certain Albanese type property of generalized Jacobians. Before stating the required result, we recall the definition of generalized (or rigidified) Jacobians. Let $C/K$ be a projective curve, and let $D \subset C$ be a finite subscheme. Then $\Pic_D(C)$ denotes the fppf sheafification of the presheaf sending a $K$-scheme $T$ to the set of all isomorphism classes of pairs $(\mathscr{L}, \theta)$, with $\mathscr{L}$ a line bundle on $C$ and $\theta\colon \mathscr{L} \xrightarrow{\sim} \calO_D$ a trivialization of $\mathscr{L}$ along $D$. The functor $\Pic_D(C)$ admits a natural map to $\Pic_{C/K}$ by simply forgetting the trivialization. This is surjective with kernel isomorphic to the cokernel of the map $\R_{\Gamma/K}(\Gm) \rightarrow \R_{D/K}(\Gm)$, where $\Gamma := {\rm{H}}^0(C, \calO_C)$. In particular, $\Pic_D(C)$ is representable by a smooth $K$-scheme. Then $\Jac_D(C) \subset \Pic_D(C)$ denotes the preimage of $\Jac(C) \subset \Pic_{C/K}$.

Now let $X \subset C^{\rm{sm}}$ be an open subscheme, with irreducible components $X_1, \dots, X_n$. If we are given a collection $\vec{x} = \{x_1, \dots, x_n\}$ of points $x_i \in X_i(K)$, then for any finite subscheme $D \subset C$ with support disjoint from $X$, we obtain a map $\iota_{\vec{x}, D}\colon X \rightarrow \Jac_D(C)$ which intuitively is defined by sending $x \in X$ to $\calO([x]-[x_i])$ for $x_i$ in the component of $x$, together with the canonical trivialization of this line bundle along $D$ which arises from the fact that $D$ and $[x]-[x_i]$ have disjoint support. More precisely, $\iota_{\vec{x},D}$ is the map which on $X_i$ corresponds to the line bundle $\calO([\Delta|_{C \times X_i}] - [x_i \times X_i])$ on $C \times X_i$, together with its canonical trivialization along $D \times X_i$, where $\Delta$ is the diagonal on $C \times C$. Note that this map makes sense because the $X_i$ are disjoint since $X$ is smooth. Note also that $\iota_{\vec{x},D}$ sends each $x_i$ to $0$. The following result, of independent interest, is the Albanese property that we require.

\begin{proposition}
\label{albanese}
Let $C/K$ be a generically smooth projective curve, and let $X \subset C^{\rm{sm}}$ be a dense open subscheme with irreducible components $X_1, \dots, X_n$. Suppose given $\vec{x} = \{x_1, \dots, x_n\}$ with $x_i \in X_i(K)$, and a $K$-scheme map $f\colon X \rightarrow G$ to a commutative $K$-group scheme of finite type. Assume that $f(x_i) = 0$ for $1 \leq i \leq n$. Then there exists a finite subscheme $D \subset C$ with support $C-X$ and a commutative diagram
\[
\begin{tikzcd}
X \arrow{r}{\iota_{\vec{x}},D} \arrow{dr}[swap]{f} & \Jac_D(C) \arrow{d}{\phi} \\
& G
\end{tikzcd}
\]
with $\phi$ a $K$-homomorphism. Furthermore, for each $D$ with support disjoint from $X$, there is at most one such $\phi$.
\end{proposition}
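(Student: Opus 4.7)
The plan is to prove existence by reducing to a classical Rosenlicht--Serre theorem on the normalization of $C$, and uniqueness by showing that the image of $\iota_{\vec{x}, D}$ generates $\Jac_D(C)$ as a group scheme. For the latter: if $\phi, \phi' \colon \Jac_D(C) \to G$ are two $K$-homomorphisms agreeing after composition with $\iota_{\vec{x}, D}$, then $\ker(\phi - \phi')$ is a closed subgroup scheme of $\Jac_D(C)$ containing $\iota_{\vec{x}, D}(X)$. Working over $\overline{K}$ and using that $X$ is dense in $C^{\rm{sm}}$ and meets every irreducible component of $C$, any rigidified line bundle in $\Jac_D(C)(\overline{K})$ can, by a standard moving argument, be represented by a divisor of the form $\sum n_{ij}([c_{ij}] - [x_i])$ with $c_{ij} \in X_i(\overline{K})$ and with canonical trivialization along $D$; iterated sums of $\iota_{\vec{x}, D}$-images of such points thus sweep out $\Jac_D(C)$, forcing $\phi = \phi'$.

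For existence, let $\pi \colon \widetilde{C} \to C$ denote the normalization, with $\widetilde{C} = \coprod_{i=1}^n \widetilde{C}_i$, where $\widetilde{C}_i$ is the normalization of the $i$-th irreducible component of $C$. Each $\widetilde{C}_i$ is a regular projective curve, and since $\pi$ is an isomorphism over $C^{\rm{sm}} \supseteq X$, we obtain lifts $\widetilde{X} := \pi^{-1}(X) \subseteq \widetilde{C}$, $\widetilde{f} \colon \widetilde{X} \to G$, and $\widetilde{x}_i \in \widetilde{C}_i(K)$. Applying the Rosenlicht--Serre theorem on each $\widetilde{C}_i$ (base changing to $\overline{K}$ if needed to secure smoothness, and descending via the uniqueness already established) yields effective divisors $\widetilde{D}_i$ on $\widetilde{C}_i$ with support $\widetilde{C}_i \setminus \widetilde{X}_i$ and $K$-homomorphisms $\widetilde{\phi}_i \colon \Jac_{\widetilde{D}_i}(\widetilde{C}_i) \to G$ with $\widetilde{\phi}_i \circ \widetilde{\iota}_{\widetilde{x}_i, \widetilde{D}_i} = \widetilde{f}|_{\widetilde{X}_i}$; the hypothesis $f(x_i) = 0$ is what allows $\widetilde{\phi}_i$ to be chosen as a genuine group homomorphism rather than merely a morphism sending $0$ to $0$. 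Setting $\widetilde{D} := \coprod_i \widetilde{D}_i$ and $\widetilde{\phi} := \sum_i \widetilde{\phi}_i \colon \prod_i \Jac_{\widetilde{D}_i}(\widetilde{C}_i) = \Jac_{\widetilde{D}}(\widetilde{C}) \to G$ then produces the lifted factorization $\widetilde{\phi} \circ \widetilde{\iota}_{\vec{\widetilde{x}}, \widetilde{D}} = \widetilde{f}$.

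To recover $\phi$ on $\Jac_D(C)$, choose a finite closed subscheme $D \subseteq C$ supported on $C \setminus X$ with multiplicities large enough that $\pi^{-1}(D)$ contains $\widetilde{D}$ as a closed subscheme --- for instance, take $D$ to be a sufficiently high nilpotent thickening of $(C \setminus X)_{\rm{red}}$. Pullback of rigidified line bundles defines a $K$-homomorphism $\pi^* \colon \Jac_D(C) \to \Jac_{\pi^{-1}(D)}(\widetilde{C})$, which we compose with the forgetful homomorphism $\Jac_{\pi^{-1}(D)}(\widetilde{C}) \to \Jac_{\widetilde{D}}(\widetilde{C})$ and with $\widetilde{\phi}$ to obtain the desired $\phi$. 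The identity $\phi \circ \iota_{\vec{x}, D} = f$ reduces to chasing a point $c \in X_i$: its image $(\calO_C([c] - [x_i]), \mathrm{triv}_D)$ pulls back under $\pi$ to $(\calO_{\widetilde{C}}([\widetilde{c}] - [\widetilde{x}_i]), \mathrm{triv}_{\pi^{-1}(D)})$, which restricts along $\widetilde{D} \subseteq \pi^{-1}(D)$ to $\widetilde{\iota}_{\vec{\widetilde{x}}, \widetilde{D}}(\widetilde{c})$, and $\widetilde{\phi}$ sends this to $\widetilde{f}(\widetilde{c}) = f(c)$. The hard part will be the precise invocation of Rosenlicht--Serre in our generality, particularly ensuring that the factorization is a group homomorphism rather than only a pointed morphism, and handling the possibly non-perfect base field via descent.
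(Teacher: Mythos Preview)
Your approach is essentially the paper's: uniqueness via generation of $\Jac_D(C)$ by the image of $\iota_{\vec{x},D}$, existence via the normalization $\widetilde{C}=\coprod_i \overline{X}_i$, Rosenlicht--Serre on each component, and then pullback along $\pi^*\colon \Jac_D(C)\to \Jac_{\widetilde{D}}(\widetilde{C})$ after enlarging $D$ so that $\widetilde{D}\subset\pi^{-1}(D)$.

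Two points where the paper's execution is sharper and resolves the worries you flag at the end. First, the ``group homomorphism vs.\ pointed morphism'' and ``non-perfect base'' issues are handled in one stroke: since $X$ is smooth, $f$ factors through the maximal smooth $K$-subgroup scheme of $G$ (as in \cite[Lem.\ C.4.1]{cgp}), so one may assume $G$ smooth and then invoke \cite[\S10.3, Th.\ 2]{neronmodels} directly over $K$ for each irreducible geometrically reduced $\overline{X}_i$, with no base change or descent needed. Second, your moving argument for uniqueness only produces rigidified bundles with the \emph{canonical} trivialization along $D$, so as stated it shows generation of $\Jac(C)$ but not of $\ker(\Jac_D(C)\to\Jac(C))$; the paper closes this gap (its Proposition~\ref{ajgens}) by contracting $D$ to a single rational point $c'$ on a curve $C'$, where $\Jac_{c'}(C')\simeq\Jac(C')$ trivially, and then checking that pullback $\Jac_{c'}(C')\to\Jac_D(C)$ is surjective.
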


Before proving Proposition \ref{albanese}, we require the following proposition.

\begin{proposition}
\label{ajgens}
Let $C/K$ be a generically smooth projective curve, let $X \subset C^{\rm{sm}}$ be a dense open subscheme of $C$ with irreducible components $X_1, \dots, X_n$. Let $x_i \in X_i(K)$, and $\vec{x} := \{x_1, \dots, x_n\}$. Let $D \subset C$ be a finite subscheme with ${\rm{supp}}(D) \cap X = \emptyset$. Then the map $$\iota_{\vec{x},D}|_X\colon X \rightarrow \Jac_D(C)$$ generates $\Jac_D(C)$.
\end{proposition}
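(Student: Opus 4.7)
My plan is to prove the stronger statement that for every $(\calL, \theta) \in \Jac_D(C)(\overline{K})$ there exist $N \geq 1$ and points $p_1, \ldots, p_N \in X(\overline{K})$ with $\sum_{j=1}^{N} \iota_{\vec{x}, D}(p_j) = (\calL, \theta)$. A closed subscheme of the smooth group scheme $\Jac_D(C)$ that contains every $\overline{K}$-point must equal $\Jac_D(C)$, and the generation claim is insensitive to the base extension $K \subset \overline{K}$ by faithfully flat descent, so this will imply the proposition. I therefore reduce to $K = \overline{K}$.

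Given $(\calL, \theta) \in \Jac_D(C)(\overline{K})$, I pick an integer $d$ (to be taken $\gg 0$ below) and set $\mathscr{M} := \calL \otimes \calO(d\sum_i [x_i])$, a line bundle of multidegree $(d, \ldots, d)$ whose restriction to $D$ carries the trivialization $\vartheta$ induced by $\theta$ (well-defined since ${\rm{supp}}(D)$ is disjoint from $\{x_i\} \subset X$). If I can find a section $s \in H^0(C, \mathscr{M})$ satisfying (a) $s|_D = 1$ under $\vartheta$, and (b) $\Div(s) \subset X$, then $\Div(s) = \sum_{j=1}^{N} [e_j]$ is an effective divisor of multidegree $(d,\dots,d)$ supported on $X$ (so $N = dn$), and the induced isomorphism $\calO(\Div(s)) \xrightarrow{\sim} \mathscr{M}$ identifies $\vartheta$ with the canonical trivialization of $\calO(\Div(s))|_D$ (by (a) together with ${\rm{supp}}\,\Div(s) \cap {\rm{supp}}(D) = \emptyset$); hence
\[
(\calL, \theta) = \bigl(\calO(\textstyle\Div(s) - d\sum_i [x_i]),\,{\rm{can.}}\bigr) = \sum_{j=1}^{N} \iota_{\vec{x}, D}(e_j),
\]
as desired. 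Note that $C - X$ is finite (since $X$ is dense open in $C^{\rm{sm}}$ and $C - C^{\rm{sm}}$ is the finite singular locus), so (b) amounts to nonvanishing of $s$ at finitely many specified points.

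To produce such $s$, I invoke Riemann-Roch and Serre vanishing. The divisor $\sum_i [x_i]$ has positive degree on every irreducible component of $C$ and is thus ample, so for $d \gg 0$ the line bundle $\mathscr{M}$ is ample; Serre vanishing then yields $H^1(C, \mathscr{M} \otimes \calI_D) = 0$, and the resulting surjection $H^0(C, \mathscr{M}) \twoheadrightarrow \calO_D$ furnishes some $s_0$ with $s_0|_D = 1$. To enforce (b), for each of the finitely many points $p \in (C - X) - {\rm{supp}}(D)$ a further enlargement of $d$ yields $H^1(C, \mathscr{M} \otimes \calI_D \otimes \calI_p) = 0$; this makes the evaluation $H^0(C, \mathscr{M} \otimes \calI_D) \to \mathscr{M}|_p$ surjective, so $s_0$ may be corrected by an element of $H^0(C, \mathscr{M} \otimes \calI_D)$ (which does not affect $s_0|_D$) to achieve $s(p) \neq 0$. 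At points $p \in {\rm{supp}}(D)$ non-vanishing is automatic since $s|_D = 1$ is a unit in $\calO_D$.

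The principal delicacy is the cohomological treatment of $\mathscr{M} \otimes \calI_D$ as a coherent (and generally non-invertible) sheaf at the singular points of $C$ lying in $D$: Serre vanishing and the long exact sequence of $0 \to \mathscr{M} \otimes \calI_D \to \mathscr{M} \to \mathscr{M}|_D \to 0$ still apply, but one must avoid assuming local freeness of $\calI_D$ in the intermediate manipulations. Once this is handled, the argument is the standard Riemann-Roch surjectivity argument translated to the rigidified setting.
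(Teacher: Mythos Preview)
Your argument is essentially correct and takes a genuinely different route from the paper's. The paper first handles $D = \emptyset$ directly (every line bundle is $\calO(E)$ with $E$ supported on $X$), then reduces the general case to it by contracting $D$ to a single rational point via Proposition~\ref{contract}: one gets $f\colon C \to C' := C/D$ with $f(D) = c'$, observes that $\Jac_{c'}(C') \simeq \Jac(C')$ since rigidification at a rational point is no data, and checks that $f^*\colon \Jac_{c'}(C') \to \Jac_D(C)$ is surjective using Lemma~\ref{rigexists} on the kernel and a Leray argument on the quotient. You instead work directly: for each $(\calL,\theta)$, Serre vanishing produces a section of an ample twist with prescribed restriction to $D$ and zeros confined to $X$. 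This is more self-contained---no contraction construction, no Lemma~\ref{rigexists}---and even shows that every $\overline{K}$-point is a \emph{positive} sum of Abel--Jacobi values; the paper's route, by contrast, recycles machinery already built for the proof of Theorem~\ref{nonextaj}, so it comes essentially for free in context.

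One small gap to patch: you must ensure that $s$ is not identically zero on any irreducible component $C_i$ of $C$. If some $C_i$ happens to be a smooth connected component of $C$ lying entirely in $X$, it meets neither $C - X$ nor ${\rm supp}(D)$, so your conditions (a) and (b) are vacuous there and $\Div(s)$ is undefined if $s|_{C_i} = 0$. The fix is immediate: add the base points $x_i$ to the finite set where you demand nonvanishing. You should also make explicit that the nonvanishing conditions can be met \emph{simultaneously}: each such condition cuts out a proper affine hyperplane in $s_0 + H^0(C, \mathscr{M}\otimes\calI_D)$ (by the evaluation surjectivity you established), and over the infinite field $\overline{K}$ one can avoid finitely many proper hyperplanes.
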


\begin{proof}
We may assume that $K$ is algebraically closed. First assume that $D = \emptyset$. Every line bundle on $C$ may be written in the form $\calO(E)$ for a divisor $E$ supported on $X$, so $\iota_{\vec{x},D}|_X$ generates $\Jac(C)(K)$, hence generates the smooth group $\Jac(C)$. Now consider the general case. Choose a finite birational morphism of curves $f\colon C \rightarrow C'$ contracting $D$ to a $K$-rational point $c' \in C'(K)$ as in Proposition \ref{contract}. Then we obtain an induced map $f^*\colon \Jac_{c'}(C') \rightarrow \Jac_D(C)$. We claim that this map is surjective. This will suffice, because the natural map $\Jac_{c'}(C') \rightarrow \Jac(C')$ is an isomorphism, since every unit on $c'$ extends to a global unit of $C'$ (and this remains true after base change), hence the proposition will follow from the case $D = \emptyset$.

To prove that $f^*$ is surjective, it suffices to check surjectivity on $K$-points. Lemma \ref{rigexists} implies that the image of $f^*$ contains every $K$-point of $\ker(\Jac_D(C) \rightarrow \Jac(C))$. Thus it only remains to show that the pullback map $\Jac(C')(K) \rightarrow \Jac(C)(K)$ is surjective. For this, it is enough to show that the map $\Pic(C') \rightarrow \Pic(C)$ is surjective. Let $\mathscr{F}$ be the sheaf defined by the following exact sequence
\[
0 \longrightarrow \calO_{C'}^{\times} \longrightarrow f_*(\calO_C^{\times}) \longrightarrow \mathscr{F} \longrightarrow 0.
\]
Then $\mathscr{F}$ is supported at $c'$, hence ${\rm{H}}^1(C', \mathscr{F}) = 0$. It thus only remains to show that the natural map $${\rm{H}}^1(C', f_*(\calO_C^{\times})) \hookrightarrow {\rm{H}}^1(C, \calO_C^{\times})$$ is an equality. For this, in turn, it suffices to show that $\R^1f_*(\calO_C^{\times}) = 0$. This amounts to the assertion that $\Pic(f^{-1}(\calO_{C',c'})) = 0$, which holds because semilocal rings have trivial Picard groups \cite[Tag 02M9]{stacks}.
\end{proof}

\begin{proof}[Proof of Proposition $\ref{albanese}$]
The uniqueness assertion follows from Proposition \ref{ajgens}, so we only need to prove existence. Because $X$ is smooth, we may replace $G$ by its maximal smooth $K$-subgroup scheme (see \cite[Lem.\,C.4.1, Rem.\,C.4.2]{cgp}) and thereby assume that $G$ is smooth. When $C$ is irreducible and geometrically reduced, the proposition follows from \cite[\S10.3, Th.\,2]{neronmodels}. Now consider general $C$ as in the proposition. Let $\pi\colon \widetilde{C} \rightarrow C$ be the map from the normalization of $C_{\rm{red}}$. Then $\widetilde{C} = \coprod_{i=1}^n \overline{X}_i$, where $\overline{X}_i$ is the regular compactification of $X_i$. By the irreducible case applied to each $X_i \subset \overline{X}_i$, there exist finite subschemes $D_i \subset \overline{X}_i$ with support $\overline{X}_i - X_i$ and a commutative diagram
\begin{equation}
\label{albaneseeqn1}
\begin{tikzcd}
X \arrow{r}{\prod\iota_{x_i,D_i}} \arrow{dr}[swap]{f} & \prod_{i=1}^n \Jac_{D_i}(\overline{X}_i) \arrow{d}{\psi} \\
& G
\end{tikzcd}
\end{equation}
for some $K$-homomorphism $\psi$. Now $\pi(D_i) \subset C-X$, and we may choose $D$ with support $C-X$ such that, for all $i$, $D_i \subset \pi^{-1}(D)$. Then we have the pullback map
\[
\pi^*\colon \Jac_D(C) \rightarrow \prod_{i=1}^n \Jac_{D_i}(\overline{X}_i),
\]
and $\pi^*\circ \iota_{\vec{x},D} = \prod_i\iota_{x_i,D_i}$. Thus we may take $\phi := \psi\circ \pi^*$.
\end{proof}

\section{(Non-)extendability of Abel-Jacobi}

Although the following proposition is not required in order to prove the main theorems, it explains the failure of Theorem \ref{nonextaj} without the assumption of non-modifiability for $C$, and the idea behind the proof is sufficiently related to that of Lemma \ref{existunits} below, which is crucial for the proof of Theorem \ref{nonextaj}, that we include it here.

\begin{proposition}
\label{modjacobian}
Let $C$ be a geometrically reduced projective curve. Suppose that $C$ is modifiable, and let $f\colon C' \rightarrow C$ be a modification of $C$. Then the pullback map $f^*\colon \Jac(C) \rightarrow \Jac(C')$ is an isomorphism.
\end{proposition}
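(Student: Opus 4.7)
The plan is to analyze the modification $f\colon C' \to C$ via a Mayer--Vietoris type exact sequence on units. Since $f$ is an isomorphism away from $x$ and, by the setup of Definition \ref{modifiabledef}, $f^{-1}(x) = \{x_X, x_Y\}$ consists of two distinct $K$-rational points (lying respectively on the copies of $X$ and $Y$ inside $C'$), the scheme-theoretic equality $X \cap Y \cap U = x$ near $x$ gives the local pushout description $\calO_{C,x} = \calO_{C',x_X} \times_K \calO_{C',x_Y}$. Passing to units, this yields a short exact sequence of \'etale sheaves on $C$,
\[
0 \to \calO_C^\times \to f_*(\calO_{C'}^\times) \to \mathscr{F} \to 0,
\]
in which $\mathscr{F}$ is a skyscraper sheaf supported at $x$ whose stalk is $K^\times$ and which, functorially in the base, is representable by the group scheme $\Gm$.

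Next I would extract the corresponding long exact sequence of Picard functors. Because $f$ is finite one has $\R^i f_* = 0$ for $i > 0$, and because $\mathscr{F}$ is the pushforward of $\Gm$ from the $K$-point $x$, its higher relative cohomology over $\Spec K$ vanishes. The result is a six-term exact sequence of smooth $K$-group schemes
\[
1 \to U_C \to U_{C'} \to \Gm \to \Pic_{C/K} \to \Pic_{C'/K} \to 0,
\]
where $U_\star := \R_{\star/K}\Gm$, the rightmost nontrivial map is the pullback $f^*$, and the map $U_{C'} \to \Gm$ is the ``ratio of evaluations'' $u \mapsto u(x_X) u(x_Y)^{-1}$. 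To conclude, it suffices to show that this last map is surjective as a map of fppf sheaves, for then the connecting map $\Gm \to \Pic_{C/K}$ vanishes and $f^*$ becomes an isomorphism of locally finite type $K$-group schemes, which restricts to an isomorphism on identity components $\Jac(C) \to \Jac(C')$.

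The main obstacle is this surjectivity, and it is precisely where the hypothesis that $f$ is not injective on connected components intervenes. Because $f$ is an isomorphism away from $x$, any merging of connected components along $f$ must take place at $x$, so the non-injectivity on components is equivalent to $x_X$ and $x_Y$ lying in \emph{distinct} connected components of $C'$. Granting this, for any $K$-algebra $R$ and any $\lambda \in R^\times$ one exhibits $u \in U_{C'}(R)$ mapping to $\lambda$ by taking $u$ equal to $\lambda$ on $C'_\alpha \times_K R$ --- where $C'_\alpha \subset C'$ is the connected component containing $x_X$, a clopen subscheme not meeting $x_Y$ --- and equal to $1$ on its clopen complement. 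This function is a unit on $C'_R$ by the independence of connected components, and by construction maps to $\lambda$ under the ratio-of-evaluations map, completing the argument.
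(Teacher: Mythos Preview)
Your argument is correct and follows essentially the same strategy as the paper: set up the short exact sequence $0 \to \calO_C^\times \to f_*\calO_{C'}^\times \to \mathscr{F} \to 0$ with $\mathscr{F}$ a skyscraper at $x$, use vanishing of $\R^1 f_*$ to identify $\Pic$-groups, and then deduce the isomorphism from the surjectivity of global units of $C'$ onto $\mathscr{F}_x$, which is exactly where the ``not injective on $\pi_0$'' hypothesis enters via the observation that $x_X$ and $x_Y$ lie on different connected components of $C'$. The only difference is packaging: the paper first reduces to $K=\overline{K}$ and checks the isomorphism on $\Pic(C_R)\to\Pic(C'_R)$ for Artin-local $R$, whereas you work directly with the six-term exact sequence of representable fppf sheaves over $K$; your formulation avoids the base-change step and is a bit cleaner, but the content is the same.
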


\begin{proof}
We use the notation of Definition \ref{modifiabledef}. We first note that modifiability is preserved under field extension. Indeed, because $X$ is irreducible and admits a $K$-point, it is geometrically irreducible. Furthermore, if we take two distinct connected components of $C'$ that map to the same component of $C$, this component must be the one containing $x$ (since the modification from $C$ to $C'$ only occurs at the point $x$), hence is geometrically connected (again, because it contains a $K$-point). So the two components still map to the same component upon extending scalars. Thus we may assume that $K$ is algebraically closed.

It suffices to prove that the map $\Pic(C_R) \rightarrow \Pic(C'_R)$ is an isomorphism for every Artin-local $K$-algebra $R$, as then it is an isomorphism on $K$-points, hence surjective with finite connected kernel (because the Jacobians are smooth), and also an isomorphism on $K[\epsilon]/(\epsilon^2)$-points, hence \'etale. Let $\mathscr{G}$ be defined by the following exact sequence:
\[
0 \longrightarrow \calO_{C_R}^{\times} \longrightarrow f_*(\calO_{C'_R}^{\times}) \longrightarrow \mathscr{G} \longrightarrow 0.
\]
Then $\mathscr{G}$ is a skyscraper sheaf supported at $x$. (Note that $C_R$ and $C'_R$ have the same underlying topological spaces as $C$ and $C'$, respectively. Here we use the fact that $K$ is algebraically closed.) We claim that the natural inclusion ${\rm{H}}^1(C_R, f_*(\calO_{C'_R}^{\times})) \hookrightarrow {\rm{H}}^1(C'_R, \calO_{C'_R}^{\times})$ is an isomorphism. This follows from the vanishing of the sheaf $\R^1f_*(\calO_{C'_R}^{\times})$, which in turn follows from the fact that semilocal rings have vanishing Picard groups \cite[Tag 02M9]{stacks}. Because $\mathscr{G}$ is supported on a $0$-dimensional subspace, its higher cohomology vanishes. It thus only remains to show that the map ${\rm{H}}^0(C, f_*(\calO_{C'_R}^{\times})) = {\rm{H}}^0(C', \calO_{C'_R}^{\times}) \rightarrow {\rm{H}}^0(C, \mathscr{G})$ is surjective.

What we must show is that every unit on $C'_R$ in a neighborhood of the fiber above $x$ is (perhaps after shrinking the neighborhood further) the product of (the pullback of) a unit of $C_R$ in a neighborhood of $x$ and a global unit of $C'_R$. Let $Y$ be the union of the irreducible components of $C$ other than $X$. Then, in a neighborhood of the fiber above $x$, $C'_R$ agrees with the disjoint union of $Y_R$ and $X_R$. Because $Y_R$ is a closed subscheme of $C_R$, every section of $\mathscr{G}_x$ is represented by a pair $(u, 1)$, where $u$ is a unit on $X_R$ in a neighborhood of the point $z_R$ above $x_R$, and $1$ is on $Y_R$. Because the map $f$ is not injective on connected components, small neighborhoods of $z_R$ and of the fiber above $x$ in $Y_R$ are contained in distinct connected components of $C'_R$ (because the only topological modification of $C$ to get to $C'$ comes from disconnecting $Y$ from $X$ at $x$). Thus there is a global unit of $C'_R$ which takes the value $1$ on the neighborhood of $Y_R$ near the $x$ fiber and the value $u(z_R) \in R^{\times}$ at $z_R$. Modifying by this unit, we may assume that $u(z_R) = 1$. Now the units $1$ on $Y$ and $u$ near $z_R$ agree on the scheme-theoretic intersection of $X$ and $Y$ near $x$, hence glue to give a unit on $C_R$.
\end{proof}

The following lemma will play a crucial role in the proof of Theorem \ref{nonextaj}.

\begin{lemma}
\label{existunits}
Let $C$ be a reduced projective curve, and assume that $C$ is not modifiable and that $K \not\simeq \F_2$ is perfect. Let $\pi\colon \widetilde{C} \rightarrow C$ denote the normalization of $C$ above one singular point $c$ of $C$, and suppose that one has an inclusion of open subschemes $C^{\rm{sm}} \subset C' \subset \widetilde{C}$ with $C'-C^{\mathrm{sm}}$ consisting of a single $K$-point $x$ lying in the fiber above $c$. Let $D \subset C$, $D' \subset \widetilde{C}$ denote finite subschemes with supports $C-C^{\rm{sm}}$, $\widetilde{C}-C'$, respectively. Finally, let $\mathscr{F}$ be the sheaf on $C$ defined by the following exact sequence:
\[
1 \longrightarrow \calO_C^{\times} \longrightarrow \pi_*(\calO_{\widetilde{C}}^{\times}) \longrightarrow \mathscr{F} \longrightarrow 1.
\]
Then there is a unit $u \in \calO_{\widetilde{C}}^{\times}$ defined in a neighborhood of $\pi^{-1}(D)$ such that $u|_{D'} = 1$, and such that the image $\overline{u}$ of $u$ in $\mathscr{F}_c$ does not lift to an element of ${\rm{H}}^0(C, \pi_*(\calO_{\widetilde{C}}^{\times})) = {\rm{H}}^0(\widetilde{C}, \calO_{\widetilde{C}}^{\times})$.
\end{lemma}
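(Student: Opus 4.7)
Plan: I will construct $u$ by prescribing it independently on disjoint affine neighborhoods of the finitely many points of $\pi^{-1}(D)$. Near each point of $\pi^{-1}(D)$ other than $x$ I set $u \equiv 1$; this automatically secures $u|_{D'} = 1$ since $\text{supp}(D') = \pi^{-1}(D) - \{x\}$. Near $x$ I realize a germ $u_x \in R^\times$, where $R := \calO_{\widetilde{C}, x}$ is a DVR with residue field $K$, chosen so that the resulting class $\overline{u}$ does not lift. The whole argument thus reduces to selecting $u_x$.

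Let $Z_x$ denote the connected component of $\widetilde{C}$ containing $x$. Since $Z_x$ is projective, geometrically reduced, connected, and admits the $K$-point $x$, evaluation at $x$ gives ${\rm H}^0(Z_x, \calO_{Z_x}) = K$, so every global unit $v$ on $\widetilde{C}$ restricts to a constant $\lambda_x \in K^\times$ on $Z_x$. A lift $u = v \cdot w$ with $w \in S^\times$ (writing $S := \calO_{C, c}$) would then satisfy $u_x = \lambda_x w_x$ at the $x$-factor of $\pi_*\calO_{\widetilde{C}, c} = \prod_{y \in \pi^{-1}(c)} \calO_{\widetilde{C}, y}$, with $w_x \in A^\times$, where $A \subseteq R$ is the image of the projection $S \to R$.

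I now split into cases. In Case A, $A \subsetneq R$: if $R^\times \subseteq A$ then for any $m \in \mathfrak{m}_R$ and any nonzero $\alpha \in K$ the unit $\alpha + m \in R^\times$ lies in $A$, so $m = (\alpha + m) - \alpha \in A$, forcing $\mathfrak{m}_R \subseteq A$ and hence $A = R$, a contradiction. Any $u_x \in R^\times \setminus A$ thus prevents lifting. In Case B, $A = R$, there are two sub-cases. Sub-case B1, when $Z_x$ contains some other preimage $y_0 \neq x$ of $c$: since $w \in S$ has a single value $w(c) \in K$ common to all fiber points and $v|_{Z_x}$ is the constant $\lambda_x$, the lifting equations at $x$ and $y_0$ force $u_x(x) = \lambda_x w(c) = 1$; choosing $u_x \in R^\times$ with $u_x(x) \in K^\times \setminus \{1\}$ (possible since $K \not\simeq \F_2$) blocks the lift.

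Sub-case B2, when $Z_x \cap \pi^{-1}(c) = \{x\}$, is the main obstacle. Here $A = R$ makes $\pi|_{Z_x}$ an isomorphism onto $X := \pi(Z_x)$, which is therefore smooth at $c$; let $X_0$ be the unique irreducible component of $X$ through $c$ and $Y_0$ the union of the remaining irreducible components of $C$. Using that the hypothesis $Z_x \cap \pi^{-1}(c) = \{x\}$ forces $X \cap \pi(\widetilde{C} - Z_x) = \{c\}$ set-theoretically (distinct connected components of $\widetilde{C}$ are disjoint and $\pi$ is an isomorphism away from $c$), I will show that if $X_0 \cap Y_0$ at $c$ were reduced then modifying $C$ at $(X_0, c)$ would produce a curve $C'$ in which $X$ (still connected as the image of $Z_x$) is separated from the rest, witnessing a modification non-injective on connected components and contradicting non-modifiability. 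Hence $X_0 \cap Y_0$ has length at least $2$ at $c$. With this in hand, taking $u_x := 1 + t$ for a uniformizer $t \in R$ produces the required obstruction: any lift $u = v \cdot w$ would force $w$ to have a nonzero first-order term at $x$ but a vanishing first-order term at every other fiber point (where $u$ and $v$ are constant), violating the first-order matching condition on elements of $S$ produced by the fat intersection. The hard step is Sub-case B2: making the modifiability contradiction airtight in the presence of additional irreducible components of $X$, and extracting the precise first-order matching condition corresponding to the length-$\geq 2$ intersection.
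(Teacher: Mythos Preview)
Your proposal is correct and follows the same overall architecture as the paper's proof: the dichotomy $A \subsetneq R$ versus $A=R$ is exactly the paper's split into ``fiber non-reduced at $x$'' versus ``fiber reduced at $x$'' (one checks $A=R$ iff $\mathfrak{m}_c R = \mathfrak{m}_R$ via Nakayama, since $R$ is module-finite over $A$), and in each case the obstruction to lifting is produced the same way.

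Where your argument differs is in the organization of Case~B. The paper works first with the \emph{irreducible} component $Z$ of $\widetilde{C}$ through $x$ (handling $c \notin \pi(Z)^{\rm sm}$, then the fat intersection $X\cap Y$, then the reduced intersection), and only in the final sub-case switches to the connected component $E$ to invoke non-modifiability. You instead use the \emph{connected} component $Z_x$ from the outset, which absorbs the paper's sub-cases differently: your B1 covers both the paper's ``$c \notin X^{\rm sm}$'' case and its final ``reduced intersection'' case in one stroke (any second fiber point in $Z_x$ suffices), while your B2 uses non-modifiability up front to force the intersection $X_0 \cap Y_0$ to be fat, after which the $u_x = 1+t$ obstruction finishes. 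This is a genuine streamlining: you avoid the paper's somewhat delicate argument that $E\cap D' \neq \emptyset$ and the subsequent value-comparison at a point $y$ that might lie over a different singular point of $C$.

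Two minor points to tighten. First, $(\pi_*\calO_{\widetilde{C}})_c$ is the semilocal ring at $\pi^{-1}(c)$, not literally $\prod_y \calO_{\widetilde{C},y}$; this is harmless since you only use that $u$ can be prescribed independently on disjoint neighborhoods of the fiber points. Second, in B2 your ``first-order matching'' should be stated as follows: normalizing so $w(c)=1$, one gets $w_y = 1$ in $\calO_{\widetilde{C},y}$ for all $y\neq x$, hence (by injectivity of $\calO_{Y_0,c}\hookrightarrow \prod_{y\neq x}\calO_{\widetilde{C},y}$, $Y_0$ being reduced) $w|_{Y_0}=1$ as a germ at $c$; since $w|_{X_0}=1+t$ and both restrictions must agree on $X_0\cap Y_0 = \Spec(R/\mathfrak{m}_R^n)$ with $n\geq 2$, one obtains $t\in\mathfrak{m}_R^2$, a contradiction. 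With these clarifications your argument is complete.
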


\begin{proof}
Suppose first that there is a neighborhood $V$ of $x$ such that $\pi^{-1}(c) \cap V$ equals $\{x\}$ set-theoretically, but not scheme-theoretically. (Equivalently, $\pi^*(\mathfrak{m}_c)$ does not generate $\mathfrak{m}_x$, where these are the maximal ideals of the local rings at the respective points.) Thus we may choose a unit $v$ in a neighborhood of $x$ that is not pulled back from $C$ locally near $x$. Modifying by a scalar, we may further assume that $v(x) = 1$. Because $\pi$ is finite, there is some positive integer $n$ such that $v \pmod{\pi^*(\mathfrak{m}^n)}$ is not pulled back from a unit in $\calO_{C,c}/\mathfrak{m}^n$, where $\mathfrak{m}$ is the maximal ideal of $\calO_{C,c}$. Then we may choose as our $u$ a unit in some neighborhood of $\pi^{-1}(D)$ which takes the value $v$ modulo $(\pi^*(\mathfrak{m}^n))$ at $x$ and takes the value $1$ on $\pi^{-1}(D)-\{x\} \supset D'$. To see that $\overline{u}$ does not come from a global unit on $\widetilde{C}$, we note that ${\rm{H}}^0(\widetilde{C}, \calO_{\widetilde{C}})$ is a product of fields, one for each component of $\widetilde{C}$, because $\widetilde{C}$ is reduced. If $u$ may be written in the form $u_1u_2$ with $u_1$ the restriction of a global unit of $\widetilde{C}$ and $u_2$ a unit of $C$ in a neighborhood of $c$, then we may rescale so that $u_2(c) = 1$, hence also $u_1(x) = 1$. Then $u_1 = 1$ in a neighborhood of $x$, hence we would get that $u$ comes from a unit of $C$, contrary to our construction. This completes the proof in the case in which there is a neighborhood $V$ of $x$ such that $\pi^{-1}(c) \cap V = \{x\}$ set-theoretically but not scheme-theoretically.

Next suppose that $\pi^{-1}(c) \cap V = \{x\}$ scheme-theoretically for some neighborhood $V$ of $x$. Note in particular that we must not have $\pi^{-1}(c) = \{x\}$ set-theoretically, for otherwise, by \cite[Ch.\,II, Lem.\,7.4]{hartshorne}, $\pi$ would be an isomorphism above $c$, in violation of the fact that $c$ is a singular point of $C$. (Singular points are the same as nonsmooth points because $K$ is perfect.) Let $Z$ be the irreducible component of $\widetilde{C}$ containing $x$, and let $X := \pi(Z)$. Also let $Y$ be the union of the other irreducible components of $C$.

Assume that $c \notin X^{\rm{sm}}$. (Of course we have $c \notin C^{\rm{sm}}$, but it may be a smooth point of $X$.) Let $g$ be the restriction of $\pi$ to a map $Z \rightarrow X$, so $g$ is the normalization of $X$ above $c$. Because $X$ is not smooth at $c$, $g$ is not an isomorphism above $c$. Thus, if $g^{-1}(c)$ consists of only the point $x$ set-theoretically, then $g^{-1}(c)$ is strictly larger than $\{x\}$ scheme-theoretically, contrary to our assumption. Therefore, $g^{-1}(c)$ contains more than one point; let $z \neq x$ be another. Then we take $u$ to be $1$ along $D'$ and such that $u(x) = \lambda$ for some $1 \neq \lambda \in K^{\times}$, and we claim that $u$ may not be written locally near $\pi^{-1}(D)$ as a product $u_1u_2$ with $u_1$ a global unit on $\widetilde{C}$ and $u_2$ a unit on $C$ in a neighborhood of $c$. Indeed, we may rescale and thereby assume that $u_2(c) = 1$. Since $u_1$ must be a scalar along each component of $\widetilde{C}$, it follows (because $u(z) = 1$) that $u_1 = 1$ along $Z$. Therefore, $u_2(x) = \lambda$, in violation of the fact that $u_2(c) = 1$. Thus we are done in this case, so may assume that $c$ is a smooth point of $X$.

Choose an open neighborhood $U \subset C$ of $c$ such that $X \cap Y \cap U = \{c\}$ set-theoretically. Suppose that this intersection is strictly larger than $\{c\}$ {\em scheme-theoretically}. Let the intersection be $T$. Then we choose as our unit $u$ something which is $1$ along $D' \cup (\pi^{-1}(T)-\{x\})$, and such that $u(x) = 1$, but $u \neq 1$ in $g^{-1}(T) \subset Z$. Suppose that $u = u_1u_2$ with $u_1$ the restriction of a global unit of $\widetilde{C}$ and $u_2$ a unit on $C$ in a neighborhood of $c$. Rescaling, we may assume that $u_2(c) = 1$. Then $u_1$ -- which is locally scalar -- equals $1$ along $\pi^{-1}(D) \cup \pi^{-1}(T)$. It then follows that $u_2 = 1$ along $T \subset Y$, hence $u = 1$ along $g^{-1}(T)$, contrary to our choice of $u$. Thus the lemma is proven in this case, so we may assume that $X \cap Y \cap U = \{c\}$ scheme-theoretically.

We are therefore now in the situation in which $c \in X^{\rm{sm}}$ and $X \cap Y \cap U = \{c\}$ scheme-theoretically for some neighborhood $U \subset Y$ of $c$. Let $E \subset \widetilde{C}$ be the connected component containing $x$. We claim that $E \cap (D'-\{x\}) \neq \emptyset$. For if not, then consider the curve $C'$ obtained by ``pulling $X$ away from $Y$ along $c$.'' More precisely, $C'$ is obtained by gluing $X-(X\cap Y - \{x\})$ and $C-\{x\}$ along their intersection $X-(X \cap Y)$. If $E \cap (D' - \{x\}) = \emptyset$, then the natural map $f\colon C' \rightarrow C$ is not injective on $\pi_0$, in violation of our assumption that $C$ is not modifiable. Thus $E \cap (D'-\{x\}) \neq \emptyset$; let $y$ be a point of intersection.

Choose $u$ to be $1$ along $D'$ but such that $u(x) = \lambda$, where $1 \neq \lambda \in K^{\times}$. To see that $\overline{u}$ does not lift to an element of ${\rm{H}}^0(\widetilde{C}, \calO_{\widetilde{C}}^{\times})$, note that any such lift has to take the same value at $x$ and $y$. Since the same holds for the pullback of any unit on $C$, and since $u$ takes distinct values at $x$ and $y$, $\overline{u}$ does not lift. This completes the proof of the lemma.
\end{proof}

Next we show that modifiability descends from the algebraic closure.

\begin{lemma}
\label{moddesc}
Let $C/K$ be a geometrically reduced projective curve each irreducible component of which is geometrically irreducible. If $C$ is modifiable over $\overline{K}$, then it is so over $K$.
\end{lemma}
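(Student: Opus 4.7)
The plan is Galois descent. Write $X_{\overline{K}} \subset C_{\overline{K}}$ and $\overline{x} \in X_{\overline{K}}^{\rm{sm}}(\overline{K})$ for the modification data over $\overline{K}$, so that $X_{\overline{K}} \cap Y_{\overline{K}} \cap \overline{U} = \{\overline{x}\}$ scheme-theoretically for some open $\overline{U} \subset C_{\overline{K}}$ (with $Y_{\overline{K}}$ the union of the other components) and the associated modification map is not injective on $\pi_0$. Because every irreducible component of $C$ is geometrically irreducible, $X_{\overline{K}} = X \otimes_K \overline{K}$ for a unique irreducible component $X \subset C$; let $Y \subset C$ be the union of the other components. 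The principal task is to show that $\overline{x}$ is $\Gal(\overline{K}/K)$-fixed, hence descends to a $K$-point $x \in X(K)$.

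The main obstacle is precisely this Galois-fixedness, and the key observation is that the non-injectivity-on-$\pi_0$ condition built into the definition of modifiability forces it. Suppose for contradiction that $\overline{x}$ admits a nontrivial Galois conjugate $\overline{x}_2 \neq \overline{x}$. Then $\overline{x}_2 \in X_{\overline{K}} \cap Y_{\overline{K}}$, since both $X_{\overline{K}}$ and $Y_{\overline{K}}$ are Galois-stable; and $\overline{x}_2$ lies in the same connected component of $C_{\overline{K}}$ as $\overline{x}$, since the geometrically irreducible (hence connected) scheme $X_{\overline{K}}$ contains both points. But then pulling $X_{\overline{K}}$ away from $Y_{\overline{K}}$ only at $\overline{x}$ leaves them still glued at $\overline{x}_2$ inside that component, so the modification map $C'_{\overline{K}} \to C_{\overline{K}}$ induces a bijection on $\pi_0$, contradicting modifiability of $C_{\overline{K}}$. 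Thus $\overline{x}$ descends to $x \in X^{\rm{sm}}(K)$, smoothness descending by faithful flatness.

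The remaining verifications are formal. The scheme-theoretic isolation $X \cap Y \cap U = \{x\}$ follows because $(X \cap Y)_x$ has residue field $K$ (as $x \in C(K)$) and its base change is the local ring $(X_{\overline{K}} \cap Y_{\overline{K}})_{\overline{x}} \cong \overline{K}$, forcing $(X \cap Y)_x = K$; finiteness of $X \cap Y$ then supplies $U$. The glueing construction producing the modification $f\colon C' \to C$ of $C$ at $x$ commutes with arbitrary base change (being assembled from open immersions), so $f_{\overline{K}}$ is identified with the modification of $C_{\overline{K}}$ at $\overline{x}$. Finally, the two preimages $\overline{x}', \overline{x}''$ of $\overline{x}$ in $C'_{\overline{K}}$ are each Galois-fixed, being uniquely distinguished by lying on the $X$-side or the $Y$-side; consequently the two distinct components $E_1, E_2 \subset C'_{\overline{K}}$ witnessing non-injectivity of $f_{\overline{K}}$ on $\pi_0$ each contain a Galois-fixed point, so both are Galois-stable and descend to distinct components of $C'$ having the same image in $\pi_0(C)$, which exhibits the required non-injectivity of $f$.
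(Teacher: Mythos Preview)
Your argument follows the same strategy as the paper's: the heart of both proofs is the observation that if the image $x$ of $\overline{x}$ had another preimage $\overline{x}_2$ in $C_{\overline{K}}$, then $\overline{x}_2$ would lie on both $X_{\overline{K}}$ and $Y_{\overline{K}}$ (each being defined over $K$), so detaching $X_{\overline{K}}$ from $Y_{\overline{K}}$ only at $\overline{x}$ would leave them joined at $\overline{x}_2$ and hence leave $\pi_0$ unchanged, contradicting modifiability. The paper phrases this via ``more than one preimage'', you phrase it via ``a nontrivial Galois conjugate''; these are equivalent since $\mathrm{Aut}(\overline{K}/K)$ acts transitively on the fiber over $x$.

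There is, however, a genuine gap in the step ``$\overline{x}$ is $\Gal(\overline{K}/K)$-fixed, hence descends to a $K$-point.'' For imperfect $K$ the extension $\overline{K}/K$ is not Galois, and the fixed field of $\mathrm{Aut}(\overline{K}/K)$ is the perfect closure $K^{\mathrm{perf}}$, not $K$; so a priori your argument only yields $\overline{x}\in X(K^{\mathrm{perf}})$. The paper closes this gap \emph{before} running the disconnection argument, by observing that the scheme-theoretic isolation $\overline{X}\cap\overline{Y}\cap\overline{U}=\{\overline{x}\}$ forces $\kappa(x)/K$ to be separable (the local ring of $X\cap Y$ at $x$ becomes, after base change to $\overline{K}$, a product of mutually isomorphic local Artin rings, one of which is $\overline{K}$, so all are $\overline{K}$, whence $(X\cap Y)_x$ is a separable field extension of $K$). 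You can fix your write-up the same way, or alternatively note that Galois-fixedness makes $\overline{x}$ the \emph{unique} preimage of $x$, so $(X\cap Y)_x\otimes_K\overline{K}$ equals the single local ring $(X_{\overline K}\cap Y_{\overline K})_{\overline x}=\overline{K}$, forcing $(X\cap Y)_x=K$ and hence $x\in C(K)$. You in fact carry out exactly this dimension computation later, but only \emph{after} already assuming $x\in C(K)$; the logic needs to be reordered so that the computation supplies $K$-rationality rather than presupposing it.

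A smaller point in the final paragraph: the claim that the Galois-stable connected components $E_1,E_2$ of $C'_{\overline K}$ descend to components of $C'$ again uses more than invariance under $\mathrm{Aut}(\overline{K}/K)$. It works because $C'$ is geometrically reduced and proper, so $\Gamma(C',\mathcal{O})$ is a finite \'etale $K$-algebra, and idempotents of such an algebra are insensitive to purely inseparable base change. The paper sidesteps this by arguing directly with the intersection graph of irreducible components, which is manifestly the same over $K$ and over $\overline{K}$ under the geometric-irreducibility hypothesis.
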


\begin{remark}
It is simpler to prove that modifiability descends from the perfect closure, and the only reason that we require the above lemma is to deal with the case $K = \F_2$, for which we need to pass to a larger extension due to the assumption that $K \not\simeq \F_2$ in the proof of Lemma \ref{existunits} (which arose from the need in that proof for there to exist an element $\lambda \in K\backslash\{0, 1\}$). In particular, the reader who does not mind excluding this case is free to just check the part of the proof below which shows that modifiability descends through purely inseparable extensions.
\end{remark}

\begin{remark}
Modifiability does not descend from the algebraic closure in general without the assumption that the irreducible components of $C$ be geometrically irreducible, for rather silly reasons. Indeed, let $L/K$ be a finite Galois extension of degree $d > 1$, and let $C$ be two copies of $\P^1_L$ glued at an $L$-rational ordinary double point, but considered as a curve over $K$. Then $C$ is not modifiable, as it has no $K$-points, but $C_L$, which is just $d$ copies of $C$, now considered as a curve over $L$, is modifiable.
\end{remark}

\begin{proof}
Suppose that one may modify $C_{\overline{K}}$ along $(\overline{X}, \overline{x})$, and let $\overline{Y} \subset C_{\overline{K}}$ denote the union of the irreducible components of $C_{\overline{K}}$ other than $\overline{X}$. Let $\pi\colon C_{\overline{K}} \rightarrow C$ denote the natural map, and let $x := \pi(\overline{x}) \in C$, $X := \pi(\overline{X})$, and let $Y \subset C$ denote the union of the components of $C$ other than $X$. Because there is a neighborhood $\overline{U}$ of $\overline{x}$ such that $\overline{X} \cap \overline{Y} \cap \overline{U} = \{\overline{x}\}$ scheme-theoretically, there is also a neighborhood $U$ of $x$ such that $X \cap Y \cap U = \{x\}$ scheme-theoretically, and this intersection is geometrically reduced, so that $x$ is a separable point of $X$ -- that is, $\kappa(x)$ is a separable extension of $K$. Furthermore, because $\overline{x} \in \overline{X}^{\rm{sm}}$, one also has $x \in X^{\rm{sm}}$.

Next we verify that $x$ is a $K$-point of $C$. Because the irreducible components of $C$ are geometrically irreducible, $\pi$ induces a bijection between irreducible components. If $x$ were not $K$-rational, then it would have $> 1$ preimage under $\pi$, and all of these preimages live in the unique irreducible component $\overline{X}$ above $X$. Thus, the curve $\overline{C}'$ in the definition of modification which is obtained by separating $\overline{X}$ and $\overline{Y}$ along the intersection point $\overline{x}$ has the property that, for $\overline{Y} \subset \overline{C}'$, the components of $\overline{Y}$ containing $\overline{x}$ also intersect $\overline{X} \subset \overline{C}'$ at any of the preimages of $x$ other than $\overline{x}$. In particular, $\overline{C}'$ has the same number of connected components as $C_{\overline{K}}$, since the connected components are determined entirely by the intersection relations among the irreducible components, which are left unchanged. This violates our assumption that $\overline{C}' \rightarrow C_{\overline{K}}$ is a modification. So we conclude that $x$ must be a $K$-rational point.

It only remains to show that separating $X$ and $Y$ at $x$ produces a modification of $C$. This follows from the fact that passing from $C$ to $C_{\overline{K}}$ has no effect on the relation ``intersects nontrivially'' between irreducible components. Because $x$ is $K$-rational, hence has only one preimage, it follows that the equivalence relation between components generated by this relation is the same for the curve $C'$ obtained by separating $X$ and $Y$ at $x$ as for the curve $\overline{C}'$.
\end{proof}

We are ready to prove Theorem \ref{nonextaj}.

\begin{proof}[Proof of Theorem \ref{nonextaj}]
Because each irreducible component of $C$ admits a $K$-point by assumption, each irreducible component is geometrically irreducible. By Lemma \ref{moddesc}, therefore, we may extend scalars and thereby assume that $K$ is algebraically closed. We may assume that $C'-C^{\rm{sm}}$ consists of a single closed point $x$, and we may replace $C'$ by its normalization and thereby assume that $C'$ is smooth. Because $C'$ is separated, it admits a smooth compactification, which is then also the (unique) smooth compactification of $C^{\rm{sm}}$, which is the normalization $C_1$ of $C$. Thus we may assume that $C^{\rm{sm}} \subset C' \subset \widetilde{C}$, where $\widetilde{C}$ is the normalization of $C$ above the image $c$ of $x$ under the normalization map $C_1 \rightarrow C$, and that $C'-C^{\rm{sm}} = \{x\}$.

Assume for the sake of contradiction that the Abel-Jacobi map $\iota_{\vec{c}}\colon C^{\rm{sm}} \rightarrow \Jac(C)$ extends to a map $f\colon C' \rightarrow \Jac(C)$. By Proposition \ref{albanese}, there is a closed subscheme $D' \subset \widetilde{C}$ with support $\widetilde{C} - C'$, and a commutative diagram
\[
\begin{tikzcd}
C' \arrow{r}{\iota_{D', \vec{c}}} \arrow{dr}[swap]{f} & \Jac_{D'}(\widetilde{C}) \arrow{d}{\phi} \\
& \Jac(C)
\end{tikzcd}
\]
with $\phi$ a $K$-homomorphism. Applying the same result again to the map $C^{\rm{sm}} \subset C' \rightarrow \Jac_{D'}(\widetilde{C})$, we obtain a closed subscheme $D \subset C$ with support $C - C^{\rm{sm}}$, and a commutative diagram of $K$-homomorphisms
\begin{equation}
\label{eqn1}
\begin{tikzcd}
\Jac_D(C) \arrow{r}{\psi} \arrow{dr}[swap]{\alpha} & \Jac_{D'}(\widetilde{C}) \arrow{d}{\phi} \\
& \Jac(C)
\end{tikzcd}
\end{equation}
where $\alpha$ is the map which simply forgets the trivialization, by the uniqueness aspect of Proposition \ref{albanese} and the fact that the composition $C \subset C' \xrightarrow{f} \Jac(C)$ is the Abel-Jacobi map $\iota_{C,\vec{c}}$. There is a similar such diagram for any divisor $D_1$ with the same support as $D$ but which contains $D$. Thus we may enlarge $D$ if necessary and thereby assume that $D' \subset \pi^{-1}(D)$, where $\pi\colon \widetilde{C} \rightarrow C$ is the normalization above $c$ map. Then we claim that $\psi$ is the natural map obtained by sending a pair $(\mathscr{L}, \theta)$ to the pair $(\pi^*(\mathscr{L}), \pi^*(\theta)|_{D'})$. To check this, by the uniqueness aspect of Proposition \ref{albanese}, it suffices to check that the two maps agree when restricted to $C$ via the map $\iota_{D, \vec{c}}$. But this follows from the fact that $\psi$ was {\em defined} to be the map associated to the composition $C \subset C' \xrightarrow{\iota_{D',\vec{c}}} \Jac_{D'}(\widetilde{C})$, and this composition indeed pulls back $\calO([c]-[c_i])$ and restricts its canonical trivialization for $c \in C_i$.

We will in fact show that there is no map $\phi$ making diagram (\ref{eqn1}) commute, which will complete the proof. Referring to the exact sequence in the statement of Lemma \ref{existunits}, that lemma says that there is a unit $u \in \calO_{\widetilde{C}}^{\times}$ defined in a neighborhood of $\pi^{-1}(D)$ such that $u|_{D'} = 1$, and such that the image $\overline{u}$ of $u$ in $\mathscr{F}_c$ does not lift to an element of ${\rm{H}}^0(C, \pi_*(\calO_{\widetilde{C}}^{\times}))$. We use this to show the non-existence of $\phi$ as in diagram (\ref{eqn1}). The unit $u$ defines a nontrivial element of ${\rm{H}}^0(C, \mathscr{F})$ which under the connecting map defines a nontrivial line bundle $\mathscr{L}$ on $C$. By Lemma \ref{rigexists} and because $u|_{D'} = 1$, there is a trivialization $\iota\colon\mathscr{L}|_D \xrightarrow{\sim} \calO_D$ such that the pair $(f^*(\mathscr{L}), f^*(\iota)|_{D'})$ is isomorphic to the trivial pair $(\calO_{\widetilde{C}}, {\rm{Id}})$. (Note: Though not the first time in this argument, here we make use of the reducedness of $C$, since we need the map $\pi$ to have image that is scheme-theoretically dense in order to apply Lemma \ref{rigexists}.) The pair $(\mathscr{L}, \iota)$ therefore defines an element of $\ker(\psi)$. But because $\mathscr{L}$ is nontrivial and $K = \overline{K}$, the image of this pair under $\alpha$ is nonzero. It follows that $\ker(\psi) \not\subset \ker(\alpha)$, so there does not exist a map $\phi$ as in (\ref{eqn1}).
\end{proof}

\section{Abel-Jacobi is a closed embedding for irreducible curves}

In this section we prove Theorem \ref{ajclemb}. The key point is to show that (under the hypotheses of the theorem) the Abel-Jacobi map is a monomorphism. This will be done via a sequence of lemmas. We first note that the result descends from covers.

\begin{lemma}
\label{desfromcov}
Let $C$ be an irreducible projective curve, $c_0 \in C^{\rm{sm}}(K)$, and let $\pi\colon C' \rightarrow C$ be a dominant morphism from another projective curve such that $\pi$ induces an isomorphism $\pi^{-1}(C^{\rm{sm}}) \xrightarrow{\sim} C^{\rm{sm}}$. If the Abel-Jacobi map $\iota_{C',c_0}\colon C'^{\rm{sm}} \rightarrow \Jac(C')$ is a monomorphism, then so is $\iota_{C,c_0}\colon C^{\rm{sm}} \rightarrow \Jac(C)$.
\end{lemma}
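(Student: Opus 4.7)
The plan is to exploit functoriality of the Jacobian under $\pi$ in order to factor $\iota_{C',c_0}$ restricted to $C^{\rm{sm}}$ through $\iota_{C,c_0}$, and then conclude by a formal argument about monomorphisms. Since $\pi$ restricts to an isomorphism $\pi^{-1}(C^{\rm{sm}}) \xrightarrow{\sim} C^{\rm{sm}}$, the open subscheme $\pi^{-1}(C^{\rm{sm}}) \subset C'$ is $K$-smooth and therefore sits inside $C'^{\rm{sm}}$. I will use this isomorphism to identify $C^{\rm{sm}}$ with an open subscheme $j\colon C^{\rm{sm}} \hookrightarrow C'^{\rm{sm}}$, viewing $c_0$ as a $K$-point of $C'^{\rm{sm}}$ via $j$, so that $\iota_{C',c_0}$ is defined and is a monomorphism by hypothesis.

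Next, pullback of line bundles along $\pi$ yields a $K$-homomorphism $\pi^*\colon \Jac(C) \to \Jac(C')$, and I will check that the square
\[
\begin{tikzcd}
C^{\rm{sm}} \arrow{r}{j} \arrow{d}[swap]{\iota_{C,c_0}} & C'^{\rm{sm}} \arrow{d}{\iota_{C',c_0}} \\
\Jac(C) \arrow{r}{\pi^*} & \Jac(C')
\end{tikzcd}
\]
commutes. This is a direct unwinding at the level of $T$-points: given $\sigma\colon T \to C^{\rm{sm}}$, the left-bottom composite gives $(\pi \times \mathrm{id}_T)^*\calO_{C \times T}([\Gamma_\sigma] - [c_0 \times T])$, and because $\pi \times \mathrm{id}_T$ is an isomorphism over a neighborhood of $\Gamma_\sigma \cup (c_0 \times T) \subset C^{\rm{sm}} \times T$, this line bundle agrees with $\calO_{C' \times T}([\Gamma_{j\circ\sigma}] - [c_0 \times T])$, which is precisely $\iota_{C',c_0}(j\circ\sigma)$.

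Finally, $j$ is an open embedding, hence a monomorphism, and $\iota_{C',c_0}$ is a monomorphism by hypothesis, so the composition $\iota_{C',c_0}\circ j = \pi^*\circ \iota_{C,c_0}$ is a monomorphism. In any category, a morphism $\alpha$ is a monomorphism whenever some postcomposition $\beta\circ\alpha$ is, since $\alpha f = \alpha g$ implies $\beta\alpha f = \beta\alpha g$, forcing $f = g$. Applying this with $\alpha = \iota_{C,c_0}$ and $\beta = \pi^*$ gives the conclusion. The only non-formal ingredient is the commutativity of the displayed square, which amounts to functoriality of the Abel-Jacobi construction in $\pi$; no serious obstacle arises.
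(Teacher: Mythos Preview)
Your argument is correct and is essentially the same as the paper's: both set up the commutative square relating $\iota_{C,c_0}$ and $\iota_{C',c_0}$ via $\pi^*$ and the identification $\pi^{-1}(C^{\rm sm})\simeq C^{\rm sm}$, and then use the elementary fact that a morphism with monic postcomposition is itself monic. Your write-up simply spells out the commutativity check and the categorical step that the paper leaves implicit.
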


\begin{proof}
This follows immediately from the following commutative diagram:
\[
\begin{tikzcd}
\pi^{-1}(C^{\rm{sm}}) \arrow{r}{\iota_{C',c_0}} \isoarrow{d} \arrow{d}[swap]{\pi} & \Jac(C') \\
C^{\rm{sm}} \arrow{r}{\iota_{c_0}} & \Jac(C) \arrow{u}{\pi^*}
\end{tikzcd}
\]
\end{proof}

Next we show that any non-smooth rational curve admits a cover from either the nodal or cuspidal cubic.

\begin{lemma}
\label{cubiccover}
Let $C$ be an integral, rational, nonsmooth projective curve over an algebraically closed field $K$. There is a finite morphism $\pi\colon D \rightarrow C$ such that $\pi$ induces an isomorphism $\pi^{-1}(C^{\rm{sm}}) \xrightarrow{\sim} C^{\rm{sm}}$, where $D \subset \P^2_K$ is either the nodal cubic curve $\{Y^2Z = XYZ+X^3\}$ or the cuspidal cubic $\{Y^2Z = X^3\}$.
\end{lemma}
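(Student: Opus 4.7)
My plan is to realize $D$ as a contraction of the normalization $\widetilde{C} \simeq \P^1$ along a length-$2$ closed subscheme $Z$, using Proposition \ref{contract}, and then to use its universal property to factor the normalization $\nu\colon \P^1 \to C$ through $D$. Since $C$ is nonsmooth, $C^{\mathrm{sing}}$ is nonempty, and I would split into two cases depending on whether some singular point of $C$ admits two distinct preimages under $\nu$.

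In the first case, where some $c \in C^{\mathrm{sing}}$ has distinct preimages $p_1, p_2 \in \nu^{-1}(c)$, I take $Z := \{p_1, p_2\}$ with its reduced structure. Part (7) of Proposition \ref{contract} identifies the local ring of the contraction $\nu_D\colon \P^1 \to D := \P^1/Z$ at the image $d_0$ of $Z$ with the fibre product $\{(f_1, f_2) \in \calO_{\P^1, p_1} \times \calO_{\P^1, p_2} : f_1(p_1) = f_2(p_2)\}$, which is an ordinary node. In the second case, where every singular point of $C$ is unibranch, I fix one such $c$ with $\nu^{-1}(c) = \{p\}$ and take $Z := V(\mathfrak{m}_p^2)$. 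The key local claim will be $\mathfrak{m}_c \subseteq \mathfrak{m}_p^2$: otherwise some $f \in \mathfrak{m}_c$ would be a uniformizer of $\calO_{\P^1,p}$, in which case $\calO_{C,c}$ would contain $K[f]$ together with the inverses of all its elements of nonzero value at $c$, forcing $\calO_{C,c} \supseteq K[f]_{(f)} = \calO_{\P^1,p}$ and contradicting the singularity of $c$; the contracted local ring is then $K + \mathfrak{m}_p^2 \calO_{\P^1,p} \simeq K[t^2, t^3]_{(t^2,t^3)}$, which is a cusp. In both cases $Z$ is scheme-theoretically contained in $\nu^{-1}(c)$ (trivially in the first case; via $\mathfrak{m}_c \subseteq \mathfrak{m}_p^2$ in the second), so the universal property of Proposition \ref{contract} produces a unique morphism $\pi\colon D \to C$ with $\pi \circ \nu_D = \nu$. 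I then invoke the classification of singular integral plane cubics over an algebraically closed field to identify the abstract $D$ with the specified nodal or cuspidal cubic in $\P^2_K$.

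Finally, I would verify that $\pi$ is finite (both $D$ and $C$ are proper, and $\pi$ has finite fibres because $\nu$ does) and an isomorphism over $C^{\mathrm{sm}}$: each smooth $c' \in C^{\mathrm{sm}}$ has its unique preimage $p' \in \P^1$ disjoint from $Z$, so $\nu_D(p')$ is a smooth point of $D$ at which the induced map on local rings is the identity on $\calO_{\P^1, p'}$. I expect the main obstacle to be the local analysis in the unibranch case — establishing $\mathfrak{m}_c \subseteq \mathfrak{m}_p^2$ and identifying the contracted semi-local ring with the local ring at the cusp of the standard cuspidal cubic.
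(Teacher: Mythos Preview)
Your strategy matches the paper's: realize $D$ as $\P^1/Z$ for a length-$2$ subscheme $Z$ via Proposition~\ref{contract}, split into the reduced and non-reduced cases for $Z$, and factor $\nu$ through $D$ by the universal property. (The paper first reduces to a single singular point by partially normalizing, but your direct approach also works.) There is, however, a genuine error in your unibranch argument: the claimed equality $K[f]_{(f)} = \calO_{\P^1,p}$ is false in general. If, say, $f = t(1+t)$ in a local coordinate $t$ at $p$, then $[K(t):K(f)] = 2$, so $K[f]_{(f)}$ has fraction field strictly smaller than that of $\calO_{\P^1,p}$. The desired conclusion $\calO_{C,c} = \calO_{\P^1,p}$ is nonetheless correct and follows from Nakayama: $\calO_{\P^1,p}$ is finite over its subring $\calO_{C,c}$ (being its normalization), the residue fields coincide (both equal $K$), and $f \in \mathfrak{m}_c$ with $v_p(f)=1$ forces $\mathfrak{m}_c\,\calO_{\P^1,p} = \mathfrak{m}_p$. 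The paper sidesteps this computation entirely by invoking \cite[Ch.~II, Lem.~7.4]{hartshorne}: if $\nu^{-1}(c)$ were scheme-theoretically a reduced point, $\nu$ would be a closed embedding near $c$.

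A smaller point: identifying the abstract contraction $D = \P^1/Z$ with the explicit plane cubic via ``the classification of singular integral plane cubics'' is the wrong way around, since $D$ is not presented as a plane curve. The paper instead writes down explicit parametrizations $t \mapsto (t^2-t,\, t^3-t^2)$ and $t \mapsto (t^2,\, t^3)$ of the two cubics and verifies directly, using Proposition~\ref{contract}(7), that the affine coordinate ring of each cubic equals $K[I]$ for the relevant ideal $I$; this shows at once that the cubic \emph{is} $\P^1/Z$. Your route can be salvaged (for instance by noting that $\mathrm{Aut}(\P^1)$ acts transitively on length-$2$ subschemes of each type, so $\P^1/Z$ depends only on whether $Z$ is reduced), but the explicit computation is shorter and self-contained.
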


\begin{proof}
Let $c_0 \in C(K)$ lie in the singular locus, and let $\widetilde{C} \rightarrow C$ denote the normalization above the complement of $c_0$. Replacing $C$ by $\widetilde{C}$, we may assume that $C-C^{\rm{sm}}$ consists of a single point $c_0$. Let $\pi\colon \P^1 \rightarrow C$ denote the normalization. First assume that $\pi^{-1}(c_0)$ consists set-theoretically of more than one point, and let $a, b \in \P^1(K)$ be distinct points lying above $c_0$. Changing coordinates, we may assume that $a, b = 0, 1$. By Proposition \ref{contract}, $\pi$ factors through a morphism $\P^1/Z \rightarrow C$, where $Z \subset \P^1$ is the reduced subscheme with underlying set $\{0, 1\}$. 

We claim that $\P^1/Z$ is identified with the nodal cubic $D_1$ of the proposition via the map $f\colon \P^1 \rightarrow D_1$ defined on affine charts $\A^1 \rightarrow \{y^2=xy+x^3\}$ by $t \mapsto (t^2-t, t^3-t^2)$. A rational inverse is given by $(x, y) \mapsto y/x$. The above therefore extend to mutually inverse maps above the smooth locus of $D_1$. One readily checks that $D_1$ is smooth away from the point $P := (0, 0)$ of the above affine chart. By Proposition \ref{contract},(7), therefore, in order to conclude that $\P^1/Z = D_1$, we only need to check that $f^*$ identifies the coordinate ring of $\{y^2=xy+x^3\}$ with $K[I] \subset K[t]$, where $I = (t(t-1))$ is the ideal associated to $Z$. Because $D_1$ is reduced and $f$ is dominant, $f^*$ is injective, so we merely need to verify that every element of $I$ lies in the image of $f^*$. That is, we must show that every element of $I$ is a polynomial (with coefficients in $K$) in $t^2-t$ and $t^3-t^2$. Let $F \in I$. We prove the assertion by induction on $d := {\rm{deg}}(F)$. If $F \neq 0$, then $d \geq 2$, so we can write $d = 3m+2n$ for some integers $m,n \geq 0$. Let $c$ denote the leading term of $F(t)$. Then $G(t) := F(t) - c(t^3-t^2)^m(t^2-t)^n$ has degree $< {\rm{deg}}(F)$ and lies in $I$, so by induction $G \in K[t^3-t^2, t^2-t]$, hence the same holds for $F$.

The argument in the case that $\pi^{-1}(c_0)$ consists of a single point set-theoretically is similar. We may assume that this point is $0$. We first must show that $\pi^{-1}(c_0)$ is not the $K$-point $0$ scheme-theoretically. If it were, then \cite[Ch.\,II, Lem.\,7.4]{hartshorne} would imply that $\pi$ is a closed embedding, hence an isomorphism. Thus $V(\mathfrak{m}^2) \subset \pi^{-1}(c_0)$, where $\mathfrak{m}$ is the maximal ideal of the local ring at $0$. Applying Proposition \ref{contract}, we conclude that $\pi$ factors through $\P^1/Z$, where $Z \subset \P^1$ is the closed subscheme defined by $\mathfrak{m}^2$. We therefore need to identify $\P^1/Z$ with the cuspidal cubic $D_2$.

This may be done via the map $g\colon \A^1 \rightarrow \{y^2 = x^3\}$, $t \mapsto (t^2, t^3)$. The above map has rational inverse given by $(x, y) \mapsto y/x$. These maps are therefore mutual inverses above the smooth locus of $D_2$. One readily checks that $D_2$ is smooth away from the point $(0,0)$ in the above affine chart. Thus we only need to verify that $g^*$ identifies the coordinate ring of $\{y^2=x^3\}$ with $K[J] \subset K[t]$, where $J = (t^2)$. Because $g$ is dominant and $D_2$ is reduced, $g^*$ is injective, so we only need to verify that $K[J] = K[t^2, t^3]$. As in the nodal case above, this follows from the fact that every integer $\geq 2$ is of the form $3m+2n$ with $m, n$ nonnegative integers.
\end{proof}

We now check that Theorem \ref{ajclemb} holds for the nodal and cuspidal cubics.

\begin{lemma}
\label{ajclembnodecusp}
Let $C$ denote either the nodal or cuspidal cubic of Lemma $\ref{cubiccover}$. Then for any point $c_0 \in C^{\rm{sm}}(K)$, the Abel-Jacobi map $\iota_{C,c_0}$ is an isomorphism.
\end{lemma}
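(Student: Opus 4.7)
The strategy is to compute all three of $C^{\rm{sm}}$, $\Jac(C)$, and the Abel-Jacobi map between them explicitly via the normalization $\pi\colon \P^1 \to C$ from Lemma \ref{cubiccover}, and verify that the Abel-Jacobi map is (up to sign and coordinate change) a M\"obius or affine-linear isomorphism.

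First, $\pi$ restricts to an isomorphism $\P^1 \setminus \pi^{-1}(c) \xrightarrow{\sim} C^{\rm{sm}}$, where $c$ is the unique singular point. In the nodal case $\pi^{-1}(c) = \{0, 1\}$ set-theoretically, so $C^{\rm{sm}} \cong \P^1 \setminus \{0, 1\} \cong \Gm$ (via a M\"obius substitution sending $\{0,1\}$ to $\{0,\infty\}$). In the cuspidal case $\pi^{-1}(c) = V(\mathfrak{m}^2)$ is the scheme-theoretic double point at $0$, so $C^{\rm{sm}} \cong \P^1 \setminus \{0\} \cong \A^1 = \Ga$.

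Next, I would identify $\Jac(C)$ with $\Gm$ (nodal case) and with $\Ga$ (cuspidal case) as $K$-group schemes by running the Examples' computations functorially over a varying affine base. From the exact sequence
\[
1 \to \calO_C^{\times} \to \pi_*(\calO_{\widetilde{C}}^{\times}) \to \mathscr{F} \to 1
\]
together with the vanishing of $\R^1\pi_*(\calO_{\widetilde{C}}^{\times})$ (semilocal rings have trivial Picard group) and $\Jac(\P^1) = 0$, the Jacobian is presented as the quotient of the skyscraper stalk $\mathscr{F}_c$ by the image of the diagonal $\Gm$ coming from the global units of $\widetilde{C}$. A direct local calculation identifies $\mathscr{F}_c$ with $(\Gm \times \Gm)/\Delta(\Gm) \cong \Gm$ in the nodal case (ratio of values of a unit at the two preimages of $c$) and with $\Ga$ in the cuspidal case (the linear-order coefficient in the Taylor expansion of a unit at $0$).

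Finally, I would compute $\iota_{c_0}$ by tracking $\calO_C([c]-[c_0])$ through its pullback to $\P^1$. Taking the canonical rational section $f_t(T) := (T-t)/(T-t_0)$ of $\calO_{\P^1}([t]-[t_0])$, the Abel-Jacobi value at the point corresponding to parameter $t$ is read off from the appropriate jet of $f_t$ at $\pi^{-1}(c)$. In the nodal case this is the ratio $f_t(0)/f_t(1) = t(1-t_0)/(t_0(1-t))$, a M\"obius transformation in $t$ sending $\{t_0, 0, 1\}$ to $\{1, 0, \infty\}$; in the cuspidal case, expanding modulo $T^2$ gives $f_t \equiv (t/t_0)\bigl(1 + T(1/t_0 - 1/t)\bigr)$, whose $\Ga$-component (after dividing out the scalar $t/t_0$) is $1/t_0 - 1/t$, an affine-linear function of $s := 1/t$. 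Both formulas visibly define isomorphisms of $K$-varieties, so $\iota_{c_0}$ is an isomorphism. The main obstacle is bookkeeping: keeping the sign conventions in the connecting map (cf.\ the remark after Lemma \ref{rigexists}) and coordinate choices consistent throughout. A different choice of sign may invert or negate the explicit Abel-Jacobi formula, but cannot destroy its bijectivity.
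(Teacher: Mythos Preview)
Your argument is correct, and in fact it is precisely the ``direct calculation'' that the paper explicitly acknowledges is possible but chooses to bypass. The paper's own proof is deliberately non-computational: it first uses transitivity of $\mathrm{Aut}(C)$ on $C^{\mathrm{sm}}(K)$ to pin down a single basepoint, then shows abstractly that $\iota_{c_0}$ separates points (a degree-one map $C \to \P^1$ would force $C \cong \P^1$). For the nodal cubic the paper concludes by observing that any point-separating morphism $\Gm \to \Gm$ is an isomorphism. For the cuspidal cubic this fails in characteristic $p$ (Artin--Schreier maps separate points without being isomorphisms), so the paper instead spreads $C$ out over a mixed-characteristic DVR, uses the characteristic-zero generic fiber to force the Abel--Jacobi map to be $aT+b$ with $a \in R$, and then argues $a$ cannot vanish on the special fiber.

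Your explicit formulas $t \mapsto t(1-t_0)/\bigl(t_0(1-t)\bigr)$ and $s \mapsto s_0 - s$ (with $s = 1/t$) accomplish the same thing more directly and uniformly in all characteristics, at the cost of the bookkeeping you flag. The paper's route trades that bookkeeping for the DVR lifting trick; yours trades the lifting trick for an honest coordinate computation. Both are valid; neither is clearly shorter.
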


\begin{proof}
We may assume that $K$ is algebraically closed and that $c_0 = [0:0:1]$ because $\mathrm{Aut}(C)$ acts transitively on $C^{\mathrm{sm}}(K)$. We first note that ${\rm{Aut}}(C)$ acts transitively on $C(K)$. Indeed, writing $C = \P^1/Z$ with $Z$ denoting either $\{0, 1\}$ or the the square of the maximal ideal at $0$, one can find automorphisms of $\P^1$ which fix $Z$ and send any $K$-point not in $Z$ to any other such point. Thus we may fix any $K$-point of $C$ that we wish -- say the point $\infty$ at infinity -- and take that as our $c_0$. The assertion of the lemma could be verified by a direct calculation, but we prefer the following approach.

First we check that the Abel-Jacobi map separates points. If not, then there exist two distinct points $x, y \in C(K)$ and a rational function $f$ on $C$, regular and nonvanishing at the singular point, such that ${\rm{div}}(f) = [x]-[y]$. This then defines a morphism $f\colon C \rightarrow \P^1$ of degree $1$. Let $\pi\colon \P^1 \rightarrow C$ denote normalization. Then $f\circ\pi$ is an automorphism of $\P^1$. Letting $h$ denote its inverse and replacing $f$ by $h\circ f$, we obtain a map $f\colon C \rightarrow \P^1$ that is a left inverse to $\pi$. It follows that it is a right inverse on $C^{\rm{sm}}$, hence everywhere. Thus we obtain that $C \simeq \P^1$, a contradiction. So the Abel-Jacobi map does indeed separate points.

Note that the cuspidal and nodal cubics make sense over $\mathrm{Spec}(\Z)$, and we have that $\Jac(C) \simeq C$ as $\Z$-schemes, by the standard calculation of these Jacobians. Let $W$ be a DVR with residue field $K$ and generic characteristic $0$. Then the Abel-Jacobi map $C_W^{\mathrm{sm}} \rightarrow \Jac(C)_W$ is quasi-finite because it separates points, and is an isomorphism on the generic fiber because any homomorphism between connected finite type $L$-groups of the same dimension which separates points must be an isomorphism when $\mathrm{char}(L) = 0$. It follows from Zariski's Main Theorem that the Abel-Jacobi map is an open immersion over all of $W$. It is therefore an isomorphism, since fiberwise it is a homomorphism of connected finite type group schemes.
\end{proof}

We may finally complete the proof of Theorem \ref{ajclemb}.

\begin{proof}[Proof of Theorem $\ref{ajclemb}$]
Note first that $C$ is irreducible and admits a $K$-point, hence is geometrically irreducible. Assume that $C \not\simeq \P^1_K$. If $C_{\overline{K}} \simeq \P^1_{\overline{K}}$, then $C$ is a smooth connected genus $0$ curve with $C(K) \neq \emptyset$, hence $C \simeq \P^1_K$ over $K$. Thus we may assume that $K$ is algebraically closed. We claim that the Abel-Jacobi map associated to $C$ is a monomorphism.

Let $\pi\colon \widetilde{C} \rightarrow C$ be the normalization. If $\widetilde{C} \not\simeq \P^1$, then it is well-known that the Abel-Jacobi map associated to $\widetilde{C}$ is a monomorphism, so the claim holds in this case by Lemma \ref{desfromcov}. Now assume that $\widetilde{C} = \P^1$. By Lemma \ref{cubiccover}, $C$ admits a cover $D \rightarrow C$ which is an isomorphism above the smooth locus of $C$, where $D$ is the cuspidal or nodal cubic. The Abel-Jacobi map associated to $D$ is a monomorphism by Lemma \ref{ajclembnodecusp}, hence also for $C$ by Lemma \ref{desfromcov}. This completes the proof that the Abel-Jacobi map is a monomorphism.

By Corollary \ref{zarclinjac}, the image $C'$ of $\iota_{c_0}$ is closed. Endow it with its reduced structure. We thus obtain a surjective map $f\colon C^{\rm{sm}} \rightarrow C'$ of curves, which we want to be an isomorphism. The induced map $C^{\rm{sm}} \rightarrow \widetilde{C'}$ to the normalization of $C'$ must also be a monomorphism, and by Theorem \ref{nonextaj}, this must be an equality. In particular, $f$ is finite. Since $f$ is a monomorphism, so in particular a homeomorphism, it is an isomorphism by \cite[Ch.\,II, Lem.\,7.4]{hartshorne}.
\end{proof}

\end{document}